\documentclass[a4paper]{article}
\usepackage{amssymb}
\usepackage{graphicx}
\usepackage{amsmath}
\usepackage{amsfonts}
\usepackage{amsthm}
\usepackage{mathrsfs}
\usepackage{dsfont}
\usepackage{indentfirst}
\usepackage{esint}
\usepackage{hyperref}
\usepackage{yhmath}

\numberwithin{equation}{section}

\newcommand{\diam}{\mathrm{diam}}

\title{\Large \bf \boldmath\ \\ Polynomial Growth Harmonic Functions on Groups of Polynomial Volume Growth} 

\author{\large  Bobo Hua$^\ast$\ \ \ \  J\"{u}rgen Jost$^{\dag}$} 

\date{}

\par
\begin{document}

\maketitle

\renewcommand{\thefootnote}{\fnsymbol{footnote}}

\footnotetext{\hspace*{-5mm} \begin{tabular}{@{}r@{}p{13.4cm}@{}}
$^\ast$ $^{\dag}$ & Max Planck Institute for Mathematics in the Sciences, Leipzig, 04103, Germany.\\
$^{\dag}$ & supported by ERC Advanced Grant FP7-267087.\\
&{Email: bobohua@mis.mpg.de, jost@mis.mpg.de}\\
&The Mathematics Subject Classification 2010: 53C21, 20F65, 20F69,
31C05, 05C63, 82B41.
\end{tabular}}

\renewcommand{\thefootnote}{\arabic{footnote}}

\newtheorem{theorem}{Theorem}[section]
\newtheorem{con}[theorem]{Conjecture}
\newtheorem{lemma}[theorem]{Lemma}
\newtheorem{corollary}[theorem]{Corollary}
\newtheorem{definition}[theorem]{Definition}
\newtheorem{conv}[theorem]{Assumption}
\newtheorem{remark}[theorem]{Remark}

\bigskip

\begin{abstract}  We consider  harmonic functions of  polynomial growth
  of some order $d$ on Cayley graphs of groups of polynomial volume
  growth of order $D$ w.r.t. the word metric and  prove
  the optimal  estimate for the dimension of the space of such
  harmonic functions. More precisely, the dimension of this space of
  harmonic functions is at most of
  order $d^{D-1}$.  As in the already known  Riemannian case, this
  estimate  is polynomial in the growth degree. More generally, our
  techniques also apply to graphs roughly isometric to Cayley graphs of groups of polynomial volume growth.

  {\bf Keywords}: Groups of polynomial growth, Polynomial growth harmonic
functions, Rough isometry.
\end{abstract}
\section{Introduction}
In \cite{G}, Gromov proved the celebrated structure theorem for
finitely generated groups of polynomial volume growth. His result
says that every such group is virtually nilpotent, i.e. it has a
nilpotent subgroup of finite index. Kleiner \cite{Kl} proved the
polynomial growth harmonic function theorem on Cayley graphs of such
groups. Namely, the space of polynomial growth harmonic functions
with a fixed growth rate is of finite dimension. With this result,
he obtained a new proof of Gromov's theorem. Then Shalom and Tao
\cite{ST} gave a quantitative version of Kleiner's result. Since
they used Colding and Minicozzi's original argument in \cite{CM2},
the dimension estimate of polynomial growth harmonic functions they
obtained is exponential in the growth degree. In this paper, we use
the more delicate volume growth property by Pansu \cite{P} to obtain
the optimal dimension estimate analogous to the Riemannian case on
Cayley graphs of groups of polynomial volume growth. This optimal
estimate is polynomial in the growth degree.

The history leading to these results started in Riemannian geometry.
In 1975, Yau \cite{Y1} proved the Liouville theorem for harmonic
functions on Riemannian manifolds with nonnegative Ricci
curvature. Then Cheng-Yau \cite{CheY} used Bochner's technique to
derive a gradient estimate for positive harmonic functions  which implies that sublinear growth harmonic
functions on these manifolds are constant. Then Yau \cite{Y2,Y3}
conjectured that the space of polynomial growth harmonic functions
with growth rate less than or equal to $d$ on Riemannian manifolds
with nonnegative Ricci curvature is of finite dimension. Li-Tam
\cite{LT} and Donnelly-Fefferman \cite{DF} independently solved the
conjecture for manifolds of dimension two. Then Colding-Minicozzi
\cite{CM1,CM2,CM3} proved Yau's conjecture in any dimension by using the volume
doubling property and the Poincar\'e inequality for arbitrary
dimension. A simplified argument by the mean value inequality can be
found in \cite{L1,CM4} where the dimension estimate is asymptotically
optimal. This inspired many generalizations on manifolds
\cite{W,T,STW,LW1,LW2,CW,KLe,Le}. Essentially, the crucial ingredients
of these proofs are the volume growth property and the Poincar\'e inequality (or mean value inequality).

It was then found that this line of reasoning carries over to
graphs. Let $(G,S)$ be a Cayley graph of a group $G$ with a finite
generating set $S$. There is a natural metric on $(G,S)$ called the
word metric, denoted by $d^S$. Let $B_p^S(n):=\{x\in G| d^S(x,p)\leq
n\}$ denote the closed geodesic ball of radius $n\in \mathds{N}$
centered at $p\in G.$ We denote by $|B_p^S(n)|:=\#\{B_p^S(n)\}$ the
volume (cardinality) of the set $B_p^S(n)$. When $e$ is the unit
element of $G$, the volume  $\beta_S(n):=|B_{e}^S(n)|$  of
$B_e^S(n)$ is called the growth function of the group. The intensive
investigation of the growth function of a finitely generated group
began after Milnor's work \cite{M1} (see \cite{M2,Wl} or the survey
papers \cite{Gri1,Gri2,Gri3}), although the notion of the growth of
a finitely generated had already been introduced earlier by A.
\c{S}varc (A. Schwarz) \cite{Sv}. A group $G$ is called of
polynomial growth if $\beta_S(n)\leq C n^D,$ for any $n\geq1$ and
some $D>0,$ which is independent of choice of the generating set $S$
since $d^S$ and $d^{S_1}$ are bi-Lipschitz equivalent for any two
finite generating sets $S$ and $S_1.$ By Gromov's theorem and Bass'
volume growth estimate of nilpotent groups \cite{Ba}, for any group
$G$ of polynomial volume growth there are constants $C_1(S), C_2(S)$
depending on $S$ and $D\in\mathds{N}$ such that for any $n\geq1,$
\begin{equation}\label{BAV1}C_1 n^D\leq \beta_S(n)\leq C_2
  n^D,\end{equation}
where the integer $D$ is called the homogeneous dimension or the
growth degree of $G$. In this paper, since $D$ is a dimensional
constant of $G,$ we always omit the dependence of $D$ for constants.
Then it is easy to see that for any $n\geq1, p\in G$
\begin{equation}\label{VDP}|B_p^S(2n)|\leq C(S)
  |B_p^S(n)|,\end{equation} which is called the volume doubling property.
Moreover, by the group structure, Kleiner \cite{Kl} obtained the
Poincar\'e inequality for Cayley graphs of finitely generated
groups. This inequality says that for finitely generated groups of polynomial volume growth, by \eqref{VDP}, the uniform Poincar\'e inequality holds
\begin{equation}\label{PI}\sum_{x\in B_p^S(n)}(u(x)-\bar{u})^2\leq C(S)n^2\sum_{x,y\in B_p^S(3n);x\sim y}(u(x)-u(y))^2,\end{equation} where $\bar{u}=\frac{1}{|B_p^S(n)|}\sum_{x\in B_p^S(n)}u(x)$ and $x\sim y$ means they are neighbors.

The discrete Laplacian operator $L^S$ on the Cayley graph $(G,S)$ is defined as
\begin{equation}
L^Su(x)=\sum_{y\sim x}(u(y)-u(x)) \text{ for }x\in G.
\end{equation}
A function $u:G\rightarrow \mathds{R}$ is called harmonic on $G$ if $L^Su(x)=0$
for any $x\in G.$ For some fixed $ p\in G,$ let us denote by
\begin{equation}
H^d(G,S):=\{u:G\rightarrow \mathds{R}\mid\ L^S u=0, |u|(x)\leq
C(d^S(p,x)+1)^d\}
\end{equation}
the space of polynomial growth harmonic functions of growth rate less
than or equal to $d$ on $(G,S).$ Kleiner \cite{Kl} (or \cite{ST})
adapted the original argument in Colding-Minicozzi \cite{CM2} to show
that $\dim H^d(G,S)\leq C_1(S)e^{C_2(S)d^2},$ i.e. it is exponential
in the growth degree which is not optimal. Delmotte \cite{D1} proved
the polynomial growth harmonic function theorem $\dim H^d(G,S)\leq
C(S)d^{v(S)}$ for graphs satisfying the volume doubling property
\eqref{VDP} and the Poincar\'e inequality \eqref{PI}. Instead of using
the volume doubling property \eqref{VDP}, as in the Riemannian
manifold case (see \cite{CM3,CM4,L1}), Hua-Jost-Liu \cite{HJL} and
Hua-Jost \cite{HJ} applied a more delicate volume growth property called the relative volume comparison to obtain the optimal dimension estimate on planar graphs with nonnegative combinatorial curvature.

Besides Bass' volume growth estimate \eqref{BAV1}, Pansu \cite{P}
proved a more delicate volume growth property for groups of
polynomial volume growth (see also Breuillard \cite{Bre}). He showed
that for the Cayley graph $(G,S)$ of a group $G$ of polynomial
volume growth the following limit exists
\begin{equation}
\lim_{n\rightarrow \infty} \frac{\beta_S(n)}{n^D}=C(S),
\end{equation} where $D\in \mathds{N}$. Then it is easy to see (Lemma \ref{RVCAI}) that for any $\theta\ll1,$ there exists $R_0(\theta,S)$ such that
\begin{equation}\label{RVII1}
\frac{|B_p(R)|}{|B_p(r)|}\leq (1+\theta)\left(\frac{R}{r}\right)^D,
\end{equation} for any $R\geq r\geq R_0(\theta,S),$ which is called the relative volume comparison in the large. By combining this with the mean value inequality (see Lemma \ref{MVI1}), which is a result of Moser iteration (c.f. \cite{D2,HS,CoG}), we obtain the optimal dimension estimate by Li's argument \cite{L1,L2}. This estimate is asymptotically optimal since it is achieved in the finitely generated abelian group case (see \cite{HJLi,H,N}).

\begin{theorem}\label{MTAL}
Let $(G,S)$ be a Cayley graph of a group of polynomial volume growth with homogeneous dimension $D$. Then for $d\geq1$
$$\dim H^d(G,S)\leq C(S)d^{D-1}.$$
\end{theorem}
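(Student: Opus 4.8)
The plan is to adapt Li's dimension-counting scheme \cite{L1,L2} to the combinatorial setting, feeding it the two analytic inputs highlighted above: the sharp relative volume comparison \eqref{RVII1} and the mean value inequality (Lemma \ref{MVI1}). It suffices to bound the dimension of an arbitrary finite-dimensional subspace $K\subseteq H^d(G,S)$; write $k=\dim K$. Fixing the base point $p$, introduce for each $r>0$ the bilinear form
\[
A_r(u,v)=\sum_{x\in B_p^S(r)}u(x)v(x),\qquad u,v\in K,
\]
which is nondecreasing in $r$ as a quadratic form and, for all $r$ large enough, positive definite on $K$ (a nonzero harmonic function cannot vanish on a large ball). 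The entire argument tracks how the quadratic forms $A_r$ evolve on the fixed space $K$ as $r\to\infty$, and the goal is to show $k\le C(S)\,d^{D-1}$.

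First I would record the ``volume side'' of the estimate. Let $\beta>1$ be a fixed dilation factor (large enough for Lemma \ref{MVI1} to apply) and let $\{u_i\}_{i=1}^k$ be an $A_{\beta r}$-orthonormal basis of $K$. The mean value inequality bounds the supremum of a harmonic function on $B_p^S(r)$ by its average over $B_p^S(\beta r)$, so
\[
A_r(u_i,u_i)\le |B_p^S(r)|\,\sup_{B_p^S(r)}u_i^2\le \frac{C(S)\,|B_p^S(r)|}{|B_p^S(\beta r)|}\,A_{\beta r}(u_i,u_i),
\]
whence, summing over $i$,
\[
\mathrm{tr}\big(A_{\beta r}^{-1}A_r\big)=\sum_{i=1}^k A_r(u_i,u_i)\le \frac{C(S)\,|B_p^S(r)|}{|B_p^S(\beta r)|}\,k .
\]
This is exactly the point at which the \emph{sharp} volume asymptotic \eqref{RVII1} is decisive: for $r\ge R_0(\theta,S)$ it forces $|B_p^S(r)|/|B_p^S(\beta r)|\le (1+\theta)\beta^{-D}$, so the quadratic forms grow, on average over the $k$ directions, essentially like the optimal factor $\beta^{D}$ rather than merely a doubling constant as would follow from \eqref{VDP}.

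Next I would bring in the ``degree side'' and invoke Li's counting lemma. The polynomial growth bound $|u|(x)\le C(d^S(p,x)+1)^d$ controls how fast $A_{\beta r}(u,u)$ can grow relative to $A_r(u,u)$; quantitatively, a function of growth rate $d$ cannot inflate the forms by more than a factor $\beta^{2d}$ beyond the volume factor, so that along a geometric sequence of scales $r_j=\beta^{j}r_0$ a pigeonhole argument produces a scale at which the determinant of $A_{\beta r}$ relative to $A_r$ is controlled by $\beta^{(2d+D)k}(1+o(1))$. Combining this controlled growth with the trace estimate above at the same scale, and comparing it against the sharp lower bound coming from \eqref{RVII1}, one counts the number of independent directions whose effective growth exponent lies in each dyadic level $[\,2j,2(j+1)\,)$; this count is $\le C(S)\,j^{D-2}$, the discrete analogue of the number of homogeneous harmonic polynomials of degree $j$ in $D$ variables. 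Summing over $j\le d$,
\[
k\le C(S)\sum_{1\le j\le d}j^{D-2}\le C(S)\,d^{D-1},
\]
and since $K$ was an arbitrary finite-dimensional subspace this yields $\dim H^d(G,S)\le C(S)\,d^{D-1}$.

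The main obstacle is the counting step, and in particular carrying the optimal constant through it. The forms $A_{\beta r}$ and $A_r$ are not comparable function by function with a uniform constant, so one cannot simply estimate each $u\in K$ separately; the estimate must be made simultaneously in all $k$ directions (through the trace, or equivalently the determinant), and the accumulated multiplicative errors of size $(1+\theta)$ across the iteration in $r$ must be kept subpolynomial. This is exactly why the sharp comparison \eqref{RVII1}, available here thanks to Pansu's theorem, is needed in place of the mere volume doubling \eqref{VDP}: with only a doubling constant the volume factor would degrade the exponent from $D-1$ to something larger, indeed to the exponential bound obtained in \cite{Kl,ST}. A secondary, routine point is that Lemma \ref{MVI1} and \eqref{RVII1} hold only for $r\ge R_0(\theta,S)$, so the whole argument is run in the large-$r$ regime with $\theta$ chosen small and $\beta$ chosen large at the end, the finitely many small scales being absorbed into $C(S)$.
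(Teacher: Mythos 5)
Your skeleton is the right one (Li's scheme, as in the paper: a finite-dimensional subspace $K$, the family of inner products $A_r$, a pigeonhole lower bound on $\sum_i A_R(u_i,u_i)$ against an upper bound coming from the mean value inequality and Pansu's asymptotics), but the step that is supposed to produce a bound on $k$ is fatally flawed. Your trace estimate
$$\mathrm{tr}\bigl(A_{\beta r}^{-1}A_r\bigr)\leq \frac{C(S)\,|B_p^S(r)|}{|B_p^S(\beta r)|}\,k$$
carries a factor of $k$ on the right-hand side, because you bound each $A_r(u_i,u_i)\leq C\,A_{\beta r}(u_i,u_i)=C$ separately and then sum over $i$. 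Comparing this with the pigeonhole lower bound $\sum_i A_R(u_i,u_i)\geq k\beta^{-(2d+D+\delta)}$ (the paper's Lemma \ref{LM1}), the factors of $k$ cancel and you are left with $\beta^{-(2d+\delta)}\leq C(S)(1+\theta)$, which is true for trivial reasons and yields no bound on $k$ whatsoever. The idea missing from your proposal is precisely what makes the upper bound \emph{independent of $k$}: in the paper's Lemma \ref{LM2}, for each point $x\in B_p(R)$ one uses that $K_x=\{u\in K:u(x)=0\}$ has codimension at most $1$, so after an orthogonal rotation of the basis $\sum_i u_i^2(x)=v_1^2(x)$ is the value of a \emph{single} unit-norm function, to which the mean value inequality \eqref{MVI} is applied on the ball $B_x((1+\epsilon)R-r(x))$. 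Only then is the pointwise estimate summed over $x\in B_p(R)$.

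Even granting that repair, your deployment of Pansu's comparison \eqref{RVII1} would not give the exponent $D-1$. The constant in your sup-over-the-ball form of the mean value inequality is not a clean $C(S)$: for $x$ near the boundary of $B_p(r)$ it degrades like $(\beta-1)^{-D}$ by the weak comparison \eqref{RV1}, and since the lower bound forces $\beta=1+\epsilon$ with $\epsilon=\frac{1}{2d}$ (your remark that one takes ``$\beta$ chosen large at the end'' is backwards --- a large fixed $\beta$ reintroduces the factor $\beta^{2d}$, i.e.\ exactly the exponential bound of Kleiner and Shalom--Tao), a crude sup bound gives at best $k\leq C(S)d^{D}$. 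The paper gains the missing power of $\epsilon$ by integrating the pointwise estimate against the volume profile and proving, via summation by parts together with \eqref{RVC}, that $\int_{B_p(R)}(1+\epsilon-r(x)/R)^{-D}\,dx\leq C\epsilon^{-(D-1)}V_p(R)$ (estimates \eqref{ode3}--\eqref{ode5}); this is the only place the sharp comparison is actually needed, not in the ratio $|B_p(r)|/|B_p(\beta r)|$ where you invoke it. Finally, your ``counting step'' --- that the number of independent directions at dyadic level $j$ is at most $C(S)j^{D-2}$ --- is asserted, not proved; it is essentially a restatement of the theorem (the expected answer in the model case $\mathbb{Z}^D$), and no mechanism for it is offered. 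In short: the two ingredients that actually produce $d^{D-1}$, the rank-one pointwise reduction and the $\epsilon^{-(D-1)}$ integral estimate, are absent from the proposal.
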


Note that Alexopoulos \cite{Al1} characterized polynomial growth harmonic functions on groups of polynomial volume growth. By Gromov's theorem \cite{G}, every finitely generated group $G$ of polynomial growth is virtually nilpotent (i.e. it has a nilpotent subgroup $H$ of finite index). For some torsion-free subgroup $H'$ of $H,$ it can be embedded as a lattice in a simply connected nilpotent Lie group $N.$ By considering the exponential coordinate of $N,$ $N$ is identified with $\mathds{R}^q$ (see \cite{Ra}). Alexopoulos proved that every polynomial growth harmonic function on $G$ is the restriction to $H'$ of some polynomial on $\mathds{R}^q.$ It seems hard to calculate the precise dimension of polynomial growth harmonic functions by this method which depends on the embedding of the nilpotent subgroup into a simply connected Lie group. Instead of doing that, we give a dimension estimate by the geometric analysis methods of Colding-Minicozzi and Li described above.

In the second half of the paper, we generalize our result and prove the optimal dimension
estimate for polynomial growth harmonic functions on graphs with
bounded geometry roughly isometric to Cayley graphs of groups of
polynomial volume growth. Let $X=(V,E)$ be a graph with the natural
metric $d^X.$ The degree of a vertex $x\in V$ is defined as $\deg
x:=\#\{y\in V| y\sim x\}.$ A graph $X$ with bounded geometry means
that $\deg x\leq \Delta$ for some $\Delta>0$ and all $x\in V.$ The
graph $(X,d^X)$ is called roughly isometric to a Cayley graph
$(G,S,d^S)$ of a group if there is mapping $\phi:X\rightarrow G$ which
is (roughly) bi-Lipschitz in the large scale (see Definition
\ref{DEFR}, or \cite{Gro,BBI,Wo}). It is well known \cite{CS,Kl} that
the volume doubling property and the Poincar\'e inequality are
rough-isometry invariants. Let $H^d(X)$ denote the space of polynomial
growth harmonic functions on $X$ with growth rate less than or equal
to $d.$ If $(X,d^X)$ is roughly isometric to a Cayley graph $(G,S)$ of
a group of polynomial volume growth, then Colding-Minicozzi and Li's
arguments yield that $$\dim H^d(X)\leq Cd^D,$$ for $d\geq1$ where $D$
is the homogenous dimension of $G.$ This is not asymptotically
optimal. Since we cannot hope to deduce the relative volume comparison \eqref{RVII1} on $X$ from the rough isometry $\phi,$ it is hard to obtain the optimal dimension estimate on $X.$

Kanai \cite{K} proved a famous theorem that for manifolds or graphs
the property to be parabolic (recurrent in the terminology of random
walks) is a rough-isometry invariant (see also \cite{Wo}). In contrast, a counterexample of Benjamini \cite{Ben} shows the instability of the Liouville theorem under rough isometries. He constructed two roughly isometric graphs: one admits an infinite dimensional space of nonconstant bounded harmonic functions, while the other admits only constant ones. That is, the dimension estimate of polynomial growth harmonic functions is not a rough-isometry invariant in general. In addition, Lee \cite{Le} obtained the optimal dimension estimate for Riemannian manifolds with bounded geometry roughly isometric to Riemannian manifolds with nonnegative Ricci curvature. His method is quite different from ours and seems not suitable for the discrete case (for graphs).

In order  to extend the optimal dimension
estimate for polynomial growth harmonic functions on a space $X$
roughly isometric to a Cayley graph $(G,S)$ of a group of polynomial
volume growth, we use an idea of \cite{HJ}. Since the relative volume comparison \eqref{RVII1}
holds on $(G,S)$ rather than on $X,$ we do the dimension estimate on
$G$ to obtain the dimension estimate on $X.$ For any harmonic function
$u$ on $X,$ we construct a function $\widetilde{u}$ on $G$ (see
\eqref{dfi1}) which preserves the crucial property - the mean value
inequality - even though $\widetilde{u}$ is not necessarily harmonic anymore. We
need to prove the mean value inequality for $\widetilde{u}.$ Firstly,
it is easy to obtain the mean value inequality for a harmonic function
$u$ on $X$ since the volume doubling property and the Poincar\'e
inequality are stable under rough isometries (see Lemma
\ref{MVFXX}). Secondly, from a rough isometry $\phi:X\rightarrow G$ we
can construct an injective rough isometry $\phi': X\rightarrow G'$
where $G'$ is also of polynomial volume growth with the same
homogenous dimension as $G.$ Hence without loss of generality we
assume that the rough isometry $\phi$ is injective. By the injectivity of $\phi$ and the mean value inequality of $u,$ we can prove the mean value inequality in the large for $\widetilde{u}$ (see Theorem \ref{MVIIL}).

Let $P^d(G):=\{u:G\rightarrow \mathds{R}|\ |u|(x)\leq C(d^S(x,p)+1)^d\}$ denote the space of polynomial growth functions on $G$ with growth rate less than or equal to $d.$ It is easy to see that the operator
$$E:H^d(X)\rightarrow P^d(G)$$
$$u\mapsto Eu:=\widetilde{u}$$ is injective and linear. Hence $$\dim
H^d(X)=\dim EH^d(X).$$ It suffices to bound $\dim EH^d(X)$ on $G.$ By
the same argument as in the proof of Theorem \ref{MTAL}, the relative volume comparison \eqref{RVII1} and the mean value inequality in the large \eqref{MVF1} yield
\begin{theorem}\label{MT2}
Let $(X,d^X)$ be a graph with bounded geometry roughly isometric to a Cayley graph $(G,S,d^S)$ of a group of polynomial volume growth of homogenous dimension $D.$ Then for $d\geq1$
$$\dim H^d(X)\leq Cd^{D-1}.$$
\end{theorem}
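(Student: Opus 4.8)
The plan is to reduce the estimate on $X$ to an estimate on the group $G$, where the relative volume comparison \eqref{RVII1} is available, and there to run Li's counting argument exactly as in the proof of Theorem \ref{MTAL}. Since the embedding $E:H^d(X)\to P^d(G)$, $u\mapsto\widetilde{u}$, is injective and linear, it suffices to bound $\dim EH^d(X)$; equivalently, to bound $\dim K$ uniformly over all finite-dimensional subspaces $K\subseteq EH^d(X)$. Every $\widetilde{u}=Eu$ lies in $P^d(G)$, so it grows at most of order $d$, and by Theorem \ref{MVIIL} it satisfies the mean value inequality in the large \eqref{MVF1}. Thus the whole problem is transplanted to $G$: I have a space of functions of polynomial growth $d$ obeying a mean value inequality, on a graph enjoying \eqref{RVII1}.

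On $G$ I would fix a finite-dimensional $K\subseteq EH^d(X)$ with $\dim K=k$ and, for $R\geq R_0$, equip $K$ with the inner products $A_R(u,v)=\sum_{x\in B^S_p(R)}u(x)v(x)$ (positive definite once $R$ is large). Writing $\rho_R(x)=\sum_i u_i(x)^2$ for an $A_{\beta R}$-orthonormal basis $\{u_i\}$, the quantity $\mathrm{tr}_{A_{\beta R}}A_R=\sum_{x\in B^S_p(R)}\rho_{\beta R}(x)$ is the object to control. The key step is to bound $\rho_{\beta R}(x)$ pointwise by applying the mean value inequality \eqref{MVF1} on the ball $B^S_x\big(\beta R-d^S(p,x)\big)$, whose adaptive radius is the distance from $x$ to the boundary sphere of $B^S_p(\beta R)$; this gives $\rho_{\beta R}(x)\leq C/|B^S_x(\beta R-d^S(p,x))|$. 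Feeding in the sharp volume asymptotics from \eqref{RVII1} and summing over the spheres $\{d^S(p,x)=j\}$ (each of cardinality comparable to $j^{D-1}$) yields the trace bound $\mathrm{tr}_{A_{\beta R}}A_R\leq C(\beta-1)^{-(D-1)}$.

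The counting is then finished as in Theorem \ref{MTAL}. Along the geometric sequence $R_m=\beta^m R_0$ the eigenvalues $\mu_i\leq 1$ of $A_{R_m}$ relative to $A_{\beta R_m}$ satisfy $\sum_i\mu_i\leq C(\beta-1)^{-(D-1)}$, so by the arithmetic-geometric mean inequality $\det A_{R_0}/\det A_{R_m}\leq (C(\beta-1)^{-(D-1)}/k)^{km}$; on the other hand polynomial growth of order $d$ together with \eqref{RVII1} gives $\det A_{R_m}\leq C R_m^{k(2d+D)}$ by Hadamard's inequality. Comparing the two as $m\to\infty$ produces $k\leq C(\beta-1)^{-(D-1)}\beta^{2d+D}$. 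Optimizing by taking $\beta=1+\tfrac{c}{d}$ makes $\beta^{2d+D}$ bounded and $(\beta-1)^{-(D-1)}$ of order $d^{D-1}$, whence $\dim K\leq Cd^{D-1}$ uniformly in $K$, and therefore $\dim H^d(X)=\dim EH^d(X)\leq Cd^{D-1}$.

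I expect the main obstacle to be the sharpness of the trace estimate: it is precisely the adaptive choice of radius in \eqref{MVF1} combined with the relative volume comparison \eqref{RVII1} that lowers the trace bound from the crude $(\beta-1)^{-D}$ --- which the mere volume doubling \eqref{VDP} would give, and which only yields the non-optimal $d^D$ --- to $(\beta-1)^{-(D-1)}$, so the entire gain of one power of $d$ rests on tracking the $(1+\theta)$-volume asymptotics carefully as $\beta\to1$. The only other delicate point, namely that the non-harmonic transplant $\widetilde{u}$ still obeys a mean value inequality, is supplied upstream by Theorem \ref{MVIIL} (via the injectivity of the rough isometry), so on $G$ the argument runs in parallel to the harmonic case of Theorem \ref{MTAL}.
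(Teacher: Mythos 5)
Your overall route is exactly the paper's: transplant harmonic functions from $X$ to $G$ via the injective linear map $E$, invoke the mean value inequality in the large (Theorem \ref{MVIIL}) as a substitute for harmonicity, and run Li's counting argument on $G$ using the relative volume comparison \eqref{RVC}; your determinant bookkeeping with $\beta=1+c/d$ is a correct unpacking of Lemmas \ref{LM1}--\ref{LM2} and the proof of Theorem \ref{MTAL}. However, there is one genuine gap, and it sits precisely at the step you yourself identify as carrying the whole gain of one power of $d$: you justify the trace bound $\mathrm{tr}_{A_{\beta R}}A_R\leq C(\beta-1)^{-(D-1)}$ by ``summing over the spheres $\{d^S(p,x)=j\}$ (each of cardinality comparable to $j^{D-1}$)''. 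That sphere estimate is not available. Pansu's theorem \eqref{PANS} controls \emph{ball} volumes, $\beta_S(n)=Cn^D+o(n^D)$, and the increments $\beta_S(j)-\beta_S(j-1)$ cannot be bounded by $Cj^{D-1}$ from this: the error term $o(n^D)$ swamps the scale $n^{D-1}$, so individual spheres may be far larger than $j^{D-1}$. No bound of the form $\#\{x: d^S(p,x)=j\}\leq Cj^{D-1}$ is known (or used) for general groups of polynomial volume growth, and your pointwise-in-$j$ summation collapses without it.

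The repair is the one carried out in the paper's proof of Lemma \ref{LM2}, in \eqref{ode2}--\eqref{ode5}: write the sum as $\sum_j f(j)\,(V_p(j)-V_p(j-1))$ with $f(t)=(1+\epsilon-t/R)^{-D}$ and perform Abel summation (summation by parts), so that only \emph{ball} volumes $V_p(j)$ appear, weighted by the differences $f(j)-f(j+1)$; then feed in the two-sided asymptotics $V_p(j)=(1+O(\theta))\,Cj^D$ from Lemma \ref{RVCAI} with $\theta=\theta(\epsilon)$ small. This is exactly where the sharp constant $(1+\theta)$ of \eqref{RVC} (rather than mere volume doubling) enters, and it yields the trace bound $C\epsilon^{-(D-1)}$ without any sphere estimate. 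With that substitution your argument is complete and coincides with the paper's; everything else --- the injectivity of $E$, the applicability of \eqref{MVF1} to all elements of $K\subset EH^d(X)$ (by linearity of $E$), the requirement that the adaptive radius $\beta R-d^S(p,x)\geq(\beta-1)R$ exceed the thresholds $R_0$ of Theorem \ref{MVIIL} and Lemma \ref{RVCAI} for $R$ large, and the final optimization in $\beta$ --- is sound.
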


This line of the proof is even new compared with the results in Lee
\cite{Le}. As an application of Theorem \ref{MT2}, we obtain in
Corollary \ref{COAC} the optimal dimension estimate for polynomial
growth harmonic functions on quasi-transitive graphs of polynomial
volume growth (see \cite{Wo} for the definition).

Our results provide the final answer for the control of
dimension of the space of harmonic functions of some given polynomial
growth on Cayley graphs of groups of polynomial volume growth, and we
can even achieve this control more generally  in rough isometry
classes of such Cayley graphs. Also, as described above, such a
control has been achieved under curvature bounds. We may ask, however,
for which other classes of
graphs such a control might conceivably be possible. In this
direction, in \cite{BDKY}, it was shown that on a percolation cluster
in $\mathbb{Z}^D$, the space of harmonic functions of linear growth,
i.e., $d=1$ in our terminology, is almost surely of dimension at most
$D+1$. From the perspective of our paper, we should expect that also
harmonic functions of polynomial growth for any $d$ can be
controlled.

\section{Preliminaries and Notations}
Let $G$ be a group. It is called finitely generated if it has a finite generating set $S$. We always assume that the generating set $S$ is symmetric, i.e. $S=S^{-1}.$ The Cayley graph of $(G,S)$ is a graph structure $(V,E)$ with the set of vertices $V=G$ and the set of edges $E$ where for any $x,y\in G,$ $xy\in E$ (also denoted by $x\sim y$) if $x=ys$ for some $s\in S.$ The Cayley graph of $(G,S)$ is endowed with a natural metric, called
the word metric (c.f. \cite{BBI}). For any $x,y\in G,$ the distance between them is defined as the length of the shortest path connecting $x$ and $y,$
$$d^S(x,y):=\inf\{k\in \mathds{N}\mid \exists\  x=x_0\sim x_1\sim\cdots \sim x_k=y\}.$$ It is easy to see that for two generating set $S$ and $S_1$ the metrics $d^S$ and $d^S_1$ are bi-Lipschitz equivalent, i.e. there exist two constants $C_1(S,S_1), C_2(S,S_1)$ depending on $S$ and $S_1$ such that for any $x,y\in G$
$$C_1(S,S_1)d^{S_1}(x,y)\leq d^S(x,y)\leq C_2(S,S_1) d^{S_1}(x,y).$$

Let $B^S_p(n):=\{x\in G| d^S(p,x)\leq n\}$ denote the closed geodesic
ball of radius $n$ centered at $p\in G$ on the Cayley graph $(G,S),$
and $|B_p^{S}(n)|:=\#B_p^S(n)$ the volume (cardinality) of the ball
$B_p^S(n).$ By the group structure, it is obvious that
$|B_p^S(n)|=|B_q^S(n)|,$ for any $p,q\in G.$ The growth function of
$(G,S)$ is defined as $\beta_S(n):=|B_{e}^S(n)|$ where $e$ is the unit
element of $G$. A group $G$ is called of polynomial volume growth if
there exists a finite generating set $S$ such that $\beta_S(n)\leq
Cn^D$ for some $C,D>0$ and any $n\geq1$. It is easy to check that this
definition is independent of the choice of the generating set $S$ by
the bi-Lipschitz equivalence. Thus, the polynomial volume growth is indeed a property of the group $G$.

Gromov \cite{G} proved a celebrated structure theorem for groups of
polynomial volume growth: every finitely generated group of
polynomial volume growth is virtually nilpotent, i.e. it has a
nilpotent subgroup of finite index. Moreover Van den Dries and
Wilkie \cite{VW} showed that it suffices to get one scale polynomial
volume control, that is, if
$$\liminf_{n\rightarrow\infty}\frac{\log\beta_S(n)}{\log n}<\infty,$$ i.e. there exists a subsequence $\{n_i\}_{i=1}^{\infty}$ of $\{n\}$ such that for any $i\in \mathds{N}$
$$\beta_S(n_i)\leq C_1 n_i^D,$$ then $\beta_S(n)\leq C_2 n^D$ for any $n\geq1.$
By Gromov's theorem and Bass' volume growth property \cite{Ba} for nilpotent groups, we have for any Cayley graph $(G,S)$ of polynomial growth
\begin{equation}\label{BAV2}
C_1(S) n^D\leq \beta_S(n)\leq C_2(S)n^D,
\end{equation} for some $D\in \mathds{N}$ and any $n\geq1,$ where $D$ is called the homogenous dimension of $G$. Then it is easy to show the volume doubling property
\begin{equation}\label{VDP1}
|B^S_p(2n)|\leq C(S)|B^S_p(n)|,
\end{equation} for any $p\in G$ and $n\geq1.$

Moreover, Pansu \cite{P} proved the more delicate volume growth property for the Cayley graph $(G,S)$ of polynomial volume growth that
for some $D\in\mathds{N}$ the limit exists
\begin{equation}\label{PANS}
\lim_{n\rightarrow\infty}\frac{\beta_S(n)}{n^D}=C(S)<\infty.
\end{equation}
The following lemma, called the relative volume comparison in the large, is a direct consequence of Pansu's result.
\begin{lemma}\label{RVCAI}
Let $(G,S)$ be a Cayley graph of a group of polynomial volume growth. Then for any $\theta\ll1$ there exists $R_0(\theta,S)$ such that
\begin{equation}\label{RVC}
\frac{|B_p^S(R)|}{|B_p^S(r)|}\leq (1+\theta)\left(\frac{R}{r}\right)^D
\end{equation} for any $p\in G,$ $R\geq r\geq R_0(\theta,S)$ where $D$ is the homogeneous dimension of $G$.
\end{lemma}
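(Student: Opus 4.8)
The plan is to reduce the relative volume comparison to the asymptotics of the single growth function $\beta_S$ furnished by Pansu's limit \eqref{PANS}. The decisive first simplification comes from the group structure: left translation maps the ball $B_p^S(n)$ bijectively onto $B_e^S(n)$, so $|B_p^S(n)| = \beta_S(n)$ for every $p \in G$ and every integer $n$. Consequently the ratio on the left-hand side of \eqref{RVC} does not depend on the center $p$ at all, and it suffices to control $\beta_S(R)/\beta_S(r)$ uniformly.

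Next I would extract two-sided bounds from \eqref{PANS}. Since the limit $C(S)$ is strictly positive—it is bounded below by the constant $C_1(S)$ appearing in \eqref{BAV2}—for any prescribed $\epsilon > 0$ there is an integer $N(\epsilon,S)$ such that $(C-\epsilon)n^D \le \beta_S(n) \le (C+\epsilon)n^D$ for all $n \ge N(\epsilon,S)$, where I abbreviate $C = C(S)$. Dividing the upper bound at radius $R$ by the lower bound at radius $r$ gives, for integer radii $R \ge r \ge N(\epsilon,S)$,
$$\frac{\beta_S(R)}{\beta_S(r)} \le \frac{C+\epsilon}{C-\epsilon}\left(\frac{R}{r}\right)^D.$$
It then remains only to choose $\epsilon$ so small that $\frac{C+\epsilon}{C-\epsilon} \le 1+\theta$, which is possible for every $\theta \ll 1$ since the condition amounts to the linear inequality $\epsilon(2+\theta) \le \theta C$ in $\epsilon$; with this choice in hand one is tempted to simply set $R_0(\theta,S) := N(\epsilon,S)$.

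Finally I would address the one genuinely technical point, namely that the radii $R,r$ in \eqref{RVC} are real while $\beta_S$ is defined only on integers. Because $d^S$ takes integer values we have $|B_p^S(R)| = \beta_S(\lfloor R\rfloor)$, so passing from integer to real radii costs a factor coming from the elementary bound $\lfloor r\rfloor \ge r - 1 \ge r\bigl(1 - 1/R_0\bigr)$ in the denominator. Absorbing the resulting factor $(1 - 1/R_0)^{-D}$ forces me to enlarge $R_0$ so that both $\frac{C+\epsilon}{C-\epsilon}$ and $(1-1/R_0)^{-D}$ are close enough to $1$ that their product remains below $1+\theta$. I expect this bookkeeping to be the only place demanding any care; the substance of the argument is the translation-invariant reduction to $\beta_S$ together with the elementary squeezing supplied by Pansu's theorem.
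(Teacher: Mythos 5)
Your proposal is correct and follows essentially the same route as the paper's own proof: both extract two-sided bounds $(C\pm\epsilon)n^D$ on $\beta_S(n)$ from Pansu's limit \eqref{PANS}, reduce the center-dependence via translation invariance, and absorb the integer-versus-real radius discrepancy (your factor $(1-1/R_0)^{-D}$ is exactly the paper's $\bigl(\tfrac{R_0}{R_0-1}\bigr)^D$ correction) into the $1+\theta$ by taking $\epsilon$ small and $R_0$ large. The only cosmetic difference is normalization: the paper works with the relative error $\bigl|\beta_S(n)/(Cn^D)-1\bigr|<\delta$, which corresponds to your $\epsilon=\delta C$.
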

\begin{proof}
By Pansu's result \eqref{PANS}, for any $\delta\ll1,$ there exists $R_0(\delta,S)$ such that
$$\left|\frac{B_p^S(n)}{Cn^D}-1\right|<\delta,$$ for any $p\in G,$ $n\geq R_0(\delta,S).$ We denote by $[R]$ the largest integer less than or equal to $R\in\mathds{R}.$ Hence for $R\geq r\geq R_0(\delta,S)$
\begin{eqnarray*}
\frac{|B_p^S(R)|}{|B_p^S(r)|}&\leq&\frac{(1+\delta) C[R]^D}{(1-\delta)C[r]^D}=\frac{1+\delta}{1-\delta}\left(\frac{R}{r}\right)^D\left(\frac{[R]r}{[r]R}\right)^D \\
&\leq&\frac{1+\delta}{1-\delta}\left(\frac{r}{[r]}\right)^D\left(\frac{R}{r}\right)^D\\
&\leq&\frac{1+\delta}{1-\delta}\left(\frac{R_0}{R_0-1}\right)^D\left(\frac{R}{r}\right)^D\\
&=&(1+\theta)\left(\frac{R}{r}\right)^D,
\end{eqnarray*}
where $\theta=\frac{1+\delta}{1-\delta}(\frac{R_0}{R_0-1})^D-1.$ For
$\delta\ll1, R_0(\delta,S)\gg1,$ we have $\theta\ll1$ which proves
the lemma.
\end{proof}

By this lemma, it is easy to show the weak relative volume comparison that
\begin{equation}\label{RV1}
\frac{|B_p^S(R)|}{|B_p^S(r)|}\leq C(S)\left(\frac{R}{r}\right)^D
\end{equation} for any $p\in G,$ $R\geq r\geq 1,$ where $C(S)$ may not be close to $1.$

Kleiner \cite{Kl} proved the Poincar\'e inequality for any Cayley graph $(G,S)$ that there exists a constant $C(S)$ such that for any function $u$ defined on $B_p^S(3n),$ any $p\in G$ and $n\geq1,$
\begin{equation*}\sum_{x\in B_p^S(n)}(u(x)-\bar{u})^2\leq C(S)n^2\frac{|B_p^S(2n)|}{|B_p^S(n)|}\sum_{x,y\in B_p^S(3n);x\sim y}(u(x)-u(y))^2,\end{equation*} where $\bar{u}=\frac{1}{|B_p^S(n)|}\sum_{x\in B_p^S(n)}u(x).$ By the volume doubling property \eqref{VDP1}, we obtain the uniform Poincar\'e inequality for the Cayley graph $(G,S)$ of polynomial volume growth.
\begin{lemma} Let $(G,S)$ be a Cayley graph of a group of polynomial volume growth. Then there exists a constant $C_1(S)$ such that for any function $u$ defined on $B_p(3n),$ any $p\in G$ and $n\geq1,$
\begin{equation}\label{PI1}\sum_{x\in B_p^S(n)}(u(x)-\bar{u})^2\leq C_1(S)n^2\sum_{x,y\in B_p^S(3n);x\sim y}(u(x)-u(y))^2,\end{equation} where $\bar{u}=\frac{1}{|B_p^S(n)|}\sum_{x\in B_p^S(n)}u(x).$
\end{lemma}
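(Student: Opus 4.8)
The plan is to derive the uniform inequality \eqref{PI1} directly from Kleiner's Poincar\'e inequality stated immediately above, by using the volume doubling property to absorb the volume ratio that appears as a prefactor. Kleiner's inequality reads
\[
\sum_{x\in B_p^S(n)}(u(x)-\bar{u})^2\leq C(S)n^2\frac{|B_p^S(2n)|}{|B_p^S(n)|}\sum_{x,y\in B_p^S(3n);\,x\sim y}(u(x)-u(y))^2,
\]
so the only difference from the asserted form is the factor $\frac{|B_p^S(2n)|}{|B_p^S(n)|}$, and it suffices to bound this factor by a constant that is independent of both $n$ and $p$.

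First I would invoke the volume doubling property \eqref{VDP1}, namely $|B_p^S(2n)|\leq C(S)|B_p^S(n)|$ for all $p\in G$ and $n\geq1$, which yields $\frac{|B_p^S(2n)|}{|B_p^S(n)|}\leq C(S)$ uniformly. Substituting this bound into Kleiner's inequality and collecting the two occurrences of $C(S)$ into a single constant $C_1(S)$ gives exactly \eqref{PI1}. The uniformity in the center $p$ is automatic: by the group structure one has $|B_p^S(n)|=|B_e^S(n)|$ for every $p\in G$, so the volume ratio does not depend on the center at all, and the doubling constant in \eqref{VDP1} is a single constant valid across all scales $n\geq1$.

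There is no genuine obstacle in this step, since all of the analytic work is hidden in Kleiner's Poincar\'e inequality and in the doubling property \eqref{VDP1}, both of which I am treating as already established. The only point worth verifying is that the doubling constant is truly uniform in $n$ and $p$; as just noted this is immediate from translation invariance of the balls together with \eqref{VDP1}. Once that observation is recorded, the desired uniform Poincar\'e inequality follows at once.
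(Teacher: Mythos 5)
Your proof is correct and follows exactly the paper's own route: the paper derives the uniform Poincar\'e inequality \eqref{PI1} precisely by combining Kleiner's inequality (with the volume-ratio prefactor $|B_p^S(2n)|/|B_p^S(n)|$) with the volume doubling property \eqref{VDP1} to absorb that ratio into the constant. Your additional remark that translation invariance of ball volumes makes the bound uniform in the center $p$ is a harmless elaboration of the same argument.
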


For any subset $\Omega\subset G,$ we denote $d^S(x,\Omega):=\inf\{d^S(x,y)\mid y\in \Omega\}$ for any $x\in G,$ $\partial\Omega:=\{z\in G\mid d^S(z,\Omega)=1\},$ and $\bar{\Omega}:=\Omega\cup\partial\Omega.$ For any function $u:\bar{\Omega}\rightarrow \mathds{R},$ the discrete Laplacian operator is defined on $\Omega$ as $(x\in \Omega)$
$$L^Su(x)=\sum_{y\sim x}(u(y)-u(x)).$$ The function $f$ is called harmonic (subharmonic) on $\Omega$ if $L^Su(x)=0\ (\geq0),$ for any $x\in\Omega.$
Let $H^d(G,S):=\{u:G\rightarrow \mathds{R}\mid\ L^S u=0, |u|(x)\leq C(d^S(p,x)+1)^d\}$ denote the space of polynomial growth harmonic functions of growth rate less than or equal to $d$.

Let $X=(V,E)$ be a graph with the set of vertices, $V$, and the set of edges, $E$. For any $x,y\in V,$ they are called neighbors (denoted by $x\sim y$) if $xy\in E.$ The degree of a vertex $x$ is defined as $\deg x:=\#\{y\in V| y\sim x\}.$ For simplicity, we only consider locally finite graphs (i.e. $\deg x<\infty,$ for any $x\in V$) without self-loops and multiedges. A graph $X$ with bounded geometry means a connected graph satisfying $\deg x\leq \Delta$ for all $x\in V$ and some $\Delta>0,$ that is, it has uniformly bounded degree (see Woess \cite{Wo}). For any graph $X,$ there is a natural metric structure $d^X(x,y):=\inf\{k\in \mathds{N}\mid \exists\  x=x_0\sim x_1\sim\cdots \sim x_k=y\},$ where $x,y\in V.$ The closed geodesic ball is denoted by $B^X_p(n):=\{x\in V|d^X(x,p)\leq n\}$ and the volume by $|B_p^X(n)|:=\#B_p^X(n).$ Note that for graphs with bounded geometry it is equivalent to the usual definition in graph theory ($|B_p^X(n)|:=\sum_{x\in B_p^X(n)}\deg x,$ see \cite{Wo}).

 We recall the definition of rough isometries between metric spaces, also called quasi-isometries (see \cite{Gro,BBI,Wo}). For a metric space $(X,d^X)$ and some subset $\Omega\subset X,$ the distance function to $\Omega$ is defined as $d^X(x,\Omega):=\inf\{d^X(x,y)|y\in \Omega\}.$
\begin{definition}\label{DEFR}
Let $(X,d^X), (Y,d^Y)$ be two metric spaces. A rough isometry is a mapping $\phi: X\rightarrow Y$ such that
$$a^{-1}d^X(x,y)-b\leq d^Y(\phi(x),\phi(y))\leq a d^X(x,y)+b,$$ for all $x,y\in X,$ and
$$d^Y(z,\phi(X))\leq b,$$ for any $z\in Y,$ where $a\geq 1,$ $b\geq0.$ It is called an $(a,b)$-rough isometry.
\end{definition}

From a rough isometry $\phi: X\rightarrow Y,$ we can construct a rough inverse $\psi: Y\rightarrow X.$ For any $y\in Y$ we choose $x\in X$ such that $d^Y(y, \phi(x))\leq b$ and set
\begin{equation}\label{quai}\psi(y)=x.\end{equation} Then $\psi$ is an $(a,3ab)$-rough isometry if $\phi$ is an $(a,b)$-rough isometry. It is obvious that the composition of two rough isometries is again a rough isometry. Hence, to be roughly isometric is an equivalence relation between metric spaces.

In this paper, we only consider rough isometries between metric spaces of graphs with bounded geometry. It is well known that the volume doubling property and the Poincar\'e inequality are roughly isometric invariants (see \cite{CS,Kl}). Hence by the volume doubling property \eqref{VDP1} and the Poincar\'e inequality \eqref{PI1} for Cayley graphs of groups of polynomial growth, we have
\begin{lemma}\label{LMVD1}
Let $(X,d)$ be a graph with bounded geometry roughly isometric to a Cayley graph $(G,S,d^S)$ of a group of polynomial volume growth. Then
\begin{equation}\label{OXVD1}
|B_p^X(2R)|\leq C|B_p^X(R)|,
\end{equation} for any $p\in X, R\geq 1.$
\end{lemma}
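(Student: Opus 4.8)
The plan is to transfer the volume doubling property from $G$ to $X$ through the rough isometry, using the bounded geometry of both graphs to control the failure of injectivity. Let $\phi\colon X\to G$ be an $(a,b)$-rough isometry as in Definition \ref{DEFR}, and write $q=\phi(p)$. I would sandwich each $X$-ball between $G$-balls, estimating $|B_p^X(2R)|$ from above and $|B_p^X(R)|$ from below by ball volumes in $G$, and then reduce everything to the weak relative volume comparison \eqref{RV1}.

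For the upper bound, note that if $x\in B_p^X(2R)$ then the Lipschitz bound gives $d^S(\phi(x),q)\le 2aR+b$, so $\phi(B_p^X(2R))\subset B_q^S(2aR+b)$. The map $\phi$ need not be injective, but if $\phi(x)=\phi(x')$ then $a^{-1}d^X(x,x')-b\le 0$, i.e. $d^X(x,x')\le ab$; hence every fibre of $\phi$ lies in a single $X$-ball of radius $ab$, which by bounded geometry contains at most $N:=1+\Delta+\cdots+\Delta^{\lceil ab\rceil}$ vertices. Summing over the image yields
\begin{equation*}
|B_p^X(2R)|\le N\,|B_q^S(2aR+b)|.
\end{equation*}

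The lower bound is the more delicate step, and I expect it to be the main obstacle, since it requires the coarse surjectivity of $\phi$ rather than just the Lipschitz estimate. Using the rough inverse $\psi$ from \eqref{quai}, I would show that every $g\in B_q^S(\rho)$ with $\rho:=a^{-1}R-2b$ has $\psi(g)\in B_p^X(R)$: indeed $d^S(g,\phi(\psi(g)))\le b$ forces $d^S(\phi(\psi(g)),q)\le\rho+b$, and the lower Lipschitz bound then gives $d^X(\psi(g),p)\le a(\rho+2b)=R$. As before, $\psi$ collapses points only by a bounded factor, for if $\psi(g)=\psi(g')$ then $d^S(g,g')\le 2b$, so each fibre of $\psi$ lies in a $G$-ball of radius $2b$, whose cardinality is at most a constant $M=M(S,b)$ because $S$ is finite. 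Consequently, once $\rho\ge1$, i.e. $R\ge a(1+2b)$,
\begin{equation*}
|B_q^S(\rho)|\le M\,|B_p^X(R)|.
\end{equation*}

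Combining the two estimates, for $R\ge a(1+2b)$ one obtains
\begin{equation*}
\frac{|B_p^X(2R)|}{|B_p^X(R)|}\le NM\,\frac{|B_q^S(2aR+b)|}{|B_q^S(a^{-1}R-2b)|}\le C(S)\,NM\left(\frac{2aR+b}{a^{-1}R-2b}\right)^D,
\end{equation*}
the last inequality being the weak relative volume comparison \eqref{RV1} on $G$. Since the bracketed ratio tends to $2a^2$ as $R\to\infty$ and stays bounded on $R\ge a(1+2b)$, the right-hand side is bounded by a constant depending only on $S,a,b$. For the remaining bounded range $1\le R<a(1+2b)$ the inequality is immediate from bounded geometry, as $|B_p^X(2R)|$ is then at most a fixed constant while $|B_p^X(R)|\ge1$. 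This proves \eqref{OXVD1}. The essential point throughout is that the bounded geometry of $X$ controls the fibres of $\phi$ in the upper bound, while the finiteness of $S$ controls the fibres of $\psi$ in the lower bound, after which the comparison of $G$-balls is supplied directly by \eqref{RV1}.
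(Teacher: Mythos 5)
Your proof is correct. Note that the paper itself offers no argument for this lemma: it simply invokes the known fact (citing \cite{CS,Kl}) that the volume doubling property and the Poincar\'e inequality are rough-isometry invariants for graphs of bounded geometry, and then applies the doubling property \eqref{VDP1} on $G$. What you have written is a self-contained verification of that invariance in the case at hand, and the two-sided mechanism you use --- pushing $B_p^X(2R)$ forward by $\phi$ with fibres of size at most $N$ controlled by bounded geometry of $X$, and pulling a $G$-ball back through the rough inverse $\psi$ of \eqref{quai} with fibres of size at most $M$ controlled by the finiteness of $S$ --- is exactly the technique the paper deploys later for \eqref{VL1}, whose proof it explicitly describes as ``essentially the same'' as that of this lemma. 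The one substantive difference is how the two resulting $G$-balls are compared: you use the weak relative volume comparison \eqref{RV1}, i.e.\ you exploit the specific polynomial volume growth of $G$, whereas the general invariance theorem behind the paper's citation needs only the doubling property on $G$, iterated a bounded number of times (roughly $\log_2$ of the radius ratio, which tends to $2a^2$) to pass from $|B_q^S(2aR+b)|$ down to $|B_q^S(a^{-1}R-2b)|$. Your route is slightly shorter here but is tied to polynomial growth; the iteration route yields the stronger, purely axiomatic statement that doubling alone transfers under rough isometries. Your treatment of the residual range $1\le R<a(1+2b)$ by bounded geometry (numerator bounded by a constant, denominator at least $1$) is also the standard and correct way to close the argument, so nothing is missing.
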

\begin{lemma}\label{LMPI1}
Let $(X,d)$ be a graph with bounded geometry roughly isometric to a Cayley graph $(G,S,d^S)$ of a group of polynomial volume growth. Then there exist constants $C_1(\Delta,a,b,S)$ and $C_2(\Delta,a,b,S)$ such that for any $p\in X, R\geq 1$ and any function $u$ defined on $B_p^X(C_1R)$ we have
\begin{equation}\label{RIPI1}\sum_{x\in B_p^X(R)}(u(x)-\bar{u})^2\leq C_2R^2\sum_{x,y\in B_p^X(C_1R);x\sim y}(u(x)-u(y))^2,\end{equation} where $\bar{u}=\frac{1}{|B_p^X(R)|}\sum_{x\in B_p^X(R)}u(x).$
\end{lemma}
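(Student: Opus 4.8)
The plan is to transfer the Poincaré inequality \eqref{PI1} from $G$ to $X$ through the rough isometry $\phi\colon X\to G$ and its rough inverse $\psi\colon G\to X$ from \eqref{quai}, using the bounded geometry of both graphs to keep all multiplicities under control. Write $q=\phi(p)$ and recall that $\psi$ is an $(a,3ab)$-rough isometry and that $d^X(\psi\circ\phi(x),x)\le 2ab$ for every $x\in X$. First I would record the elementary ball inclusions $\phi(B_p^X(\rho))\subseteq B_q^S(a\rho+b)$ and $\psi(B_q^S(\sigma))\subseteq B_p^X(a\sigma+5ab)$, which let me match radii. Setting $n:=\lceil aR+b\rceil$, so that $n\asymp R$, and choosing $C_1=C_1(a,b)$ large enough that both $3an+5ab+K$ and $R+2ab$ are at most $C_1R$ for all $R\ge 1$ (here $K:=a(1+3b)$), the composition $v:=u\circ\psi$ is well defined on $B_q^S(3n)$; thus the $G$-Poincaré inequality on the ball $B_q^S(n)$, with Dirichlet sum over $B_q^S(3n)$, may be applied to $v$.

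Two bounded-multiplicity facts drive the transfer. Since $\psi(g)=\psi(g')$ forces $d^S(g,g')\le 3a^2b$, every fibre of $\psi$ has at most $M_1:=|B_e^S(3a^2b)|$ points; likewise $g\sim g'$ in $G$ forces $d^X(\psi(g),\psi(g'))\le K$, and every fibre of $\phi$ has at most $|B_{x}^X(ab)|$ points. Combined with $\deg\le\Delta$ on $X$ and $\deg\le|S|$ on $G$, these bounds imply that a fixed edge of $X$ is routed through by only boundedly many edges of $G$, and that a fixed vertex of $X$ is the $\psi$-image of only boundedly many vertices of $G$.

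For the gradient direction I would, for each edge $g\sim g'$ of $G$ inside $B_q^S(3n)$, fix a geodesic $\psi(g)=x_0\sim\cdots\sim x_m=\psi(g')$ in $X$ with $m\le K$ and estimate $(v(g)-v(g'))^2\le K\sum_i(u(x_i)-u(x_{i+1}))^2$ by Cauchy--Schwarz. Summing over all such $G$-edges and invoking the multiplicity bound, each $X$-edge appears a bounded number of times, and all $X$-vertices used lie in $B_p^X(3an+5ab+K)\subseteq B_p^X(C_1R)$; hence the full $G$-Dirichlet sum of $v$ is controlled by the $X$-Dirichlet sum of $u$ over $B_p^X(C_1R)$. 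For the variance direction I would use that the mean minimises the quadratic spread, so $\sum_{B_p^X(R)}(u-\bar u)^2\le\sum_{B_p^X(R)}(u-\bar v)^2$ where $\bar v$ is the mean of $v$ over $B_q^S(n)$; then for $x\in B_p^X(R)$ I write $u(x)-\bar v=(u(x)-u(\psi\circ\phi(x)))+(v(\phi(x))-\bar v)$ and square. The first term telescopes along an $X$-path of length $\le 2ab$ into the $X$-Dirichlet sum over $B_p^X(R+2ab)\subseteq B_p^X(C_1R)$, again with bounded multiplicity; summing the second over $x$ costs only the fibre factor of $\phi$ and yields a constant times $\sum_{B_q^S(n)}(v-\bar v)^2$, to which the $G$-Poincaré inequality \eqref{PI1} applies, producing the factor $C_1(S)n^2\asymp R^2$.

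Putting the pieces together, $\sum_{B_p^X(R)}(u-\bar u)^2$ is bounded by a constant multiple of $R^2$ times the $X$-Dirichlet sum over $B_p^X(C_1R)$, plus a bounded multiple of the same Dirichlet sum, which gives the claim with $C_2=C_2(\Delta,a,b,S)$. I expect the genuine difficulty to lie entirely in the bookkeeping on the gradient side, namely verifying that the geodesics chosen in $X$ to connect $\psi(g)$ to $\psi(g')$ (and $x$ to $\psi\circ\phi(x)$) overload each edge of $X$ only boundedly often. This is exactly where the bounded fibres of $\psi$ and $\phi$ and the uniform degree bound $\Delta$ are indispensable; without them the comparison constants would depend on $R$ and the inequality would fail to be uniform.
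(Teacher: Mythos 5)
Your proof is correct: the ball-matching inclusions, the bounded-fibre estimates for $\phi$ and $\psi$, the application of \eqref{PI1} to $v=u\circ\psi$, and the bounded-multiplicity path arguments on both the variance and the gradient side all check out, with constants depending only on $\Delta,a,b,S$ as the statement requires. The paper itself gives no proof of this lemma---it simply invokes the well-known rough-isometry invariance of volume doubling and the Poincar\'e inequality, citing \cite{CS,Kl}---and your argument is exactly the standard transfer proof underlying that citation, so there is no genuine difference in approach to report.
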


In the sequel, for the Cayley graph $(G,S)$ we omit the dependence of
the generating set $S$, like $B_p(n):=B^S_p(n)$ if there is no danger
of confusion; in other cases we denote it by $B^G_p(n)$ if we need to
emphasize  the difference from $B_p^X(n)$ in the graph $X$. In addition, the sum over a geodesic ball is denoted by the integration with respect to the counting measure $\int_{B_p^S(n)}u^2:=\sum_{x\in B^S_p(n)}u^2(x).$

\section{Mean value inequality and optimal dimension estimate}
In this section, we obtain the mean value inequality by the volume doubling property \eqref{VDP1} and the Poincar\'e inequality \eqref{PI1} on Cayley graphs of groups of polynomial volume growth. Then we use the relative volume comparison \eqref{RVC} and the mean value inequality to get the optimal dimension estimate for $H^d(G,S)$ by Li's argument \cite{L1,L2}.

Delmotte \cite{D2} and Holopainen-Soardi \cite{HS} independently carried out the Moser iteration on graphs satisfying the volume doubling property and the Poincar\'e inequality which implies the Harnack inequality.
\begin{lemma}[Harnack inequality]
Let $(G,S)$ be a Cayley graph of a group of polynomial volume growth. Then there exist constants $C_1(S), C_2(S)$ such that for any $p\in G, n\geq 1,$ any positive harmonic function $u$ on $B_p(C_1n)$ we have
$$\max_{B_p(n)}u\leq C_2\min_{B_p(n)}u.$$
\end{lemma}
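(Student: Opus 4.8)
The plan is to prove the Harnack inequality via Moser iteration, following the established approach of Delmotte \cite{D2} and Holopainen-Soardi \cite{HS}, which derives the inequality purely from the two hypotheses already established in this section: the volume doubling property \eqref{VDP1} and the uniform Poincar\'e inequality \eqref{PI1}. The key point is that on the Cayley graph $(G,S)$ both structural ingredients hold uniformly (the constants depend only on $S$, equivalently on $D$), so the entire Moser machinery goes through with constants of the desired form.

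First I would establish a discrete Caccioppoli-type (reverse Poincar\'e) inequality for positive harmonic functions: for a suitable cutoff function $\eta$ supported on a ball and a power $p\neq 0$, multiplying the harmonicity relation $L^S u=0$ by $\eta^2 u^{2p-1}$ and summing by parts yields control of the weighted Dirichlet energy $\sum \eta^2|\nabla u^p|^2$ by a lower-order term $\sum |\nabla\eta|^2 u^{2p}$. The summation by parts on graphs produces extra cross terms compared to the Riemannian case, so some care is needed in absorbing them, but the discrete analogue is standard. Next I would combine this energy estimate with the Poincar\'e inequality \eqref{PI1} and the volume doubling property \eqref{VDP1} to obtain a Sobolev-type inequality, and then run the Moser iteration: iterating over a geometrically shrinking sequence of radii and letting the exponent $p$ range over a geometric sequence gives, for positive $p$, an $L^p$-to-$L^\infty$ bound (yielding $\sup_{B_p(n)} u \leq C(\textstyle\fint u^{p_0})^{1/p_0}$ on a slightly larger ball) and, for negative $p$, the reverse $L^{-p}$-to-$\inf$ bound. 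The volume doubling property guarantees that the product of the iteration constants converges, producing a single constant $C(S)$.

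The final and genuinely delicate step is bridging the positive and negative exponent regimes, i.e. showing that the logarithm $\log u$ has bounded mean oscillation so that a John-Nirenberg-type argument closes the gap between $(\fint u^{p_0})^{1/p_0}$ and $(\fint u^{-p_0})^{-1/p_0}$. In the discrete setting this again rests on the Poincar\'e inequality \eqref{PI1} applied to $\log u$, using the harmonicity of $u$ to estimate $\sum \eta^2|\nabla\log u|^2$. Once the $BMO$ estimate for $\log u$ is in hand, the two one-sided bounds chain together to give $\max_{B_p(n)} u \leq C_2 \min_{B_p(n)} u$ on $B_p(n)$, at the cost of enlarging the ball on which $u$ is assumed harmonic to $B_p(C_1 n)$.

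The main obstacle I anticipate is not any single inequality but the bookkeeping needed to keep all constants depending only on $S$ (hence only on the dimensional constant $D$), and in particular ensuring that the geometric radii and the enlargement factor $C_1$ are chosen so that every invocation of \eqref{VDP1} and \eqref{PI1} is legitimate on a ball contained in $B_p(C_1 n)$. Since this Harnack inequality is a known consequence of \eqref{VDP1} and \eqref{PI1} and is cited to \cite{D2,HS}, I would most likely present the proof as a direct citation of this Moser-iteration result, remarking only that the hypotheses hold uniformly on $(G,S)$ with constants controlled by $S$, rather than reproducing the full iteration.
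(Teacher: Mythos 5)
Your proposal is correct and matches the paper's treatment: the paper proves nothing here either, stating the lemma as a direct consequence of the Moser iteration of Delmotte \cite{D2} and Holopainen--Soardi \cite{HS}, which applies because the volume doubling property \eqref{VDP1} and the uniform Poincar\'e inequality \eqref{PI1} hold on $(G,S)$ with constants depending only on $S$. Your sketch of the iteration (Caccioppoli, Sobolev, positive/negative exponents, John--Nirenberg bridge) is a faithful outline of what those references carry out, and your decision to cite rather than reproduce it is exactly what the paper does.
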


The mean value inequality is one part of the Moser iteration (see \cite{D2,HS,CoG}).
\begin{lemma}[Mean value inequality]\label{MVI1}
Let $(G,S)$ be a Cayley graph of a group of polynomial volume growth. Then there exists a constant $C_1(S)$ such that for any $p\in G, R\geq1,$ any harmonic function $u$ on $B_p(R)$ we have
\begin{equation}\label{MVI}
u^2(p)\leq \frac{C_1}{|B_p(R)|}\sum_{x\in B_p(R)}u^2(x).
\end{equation}
\end{lemma}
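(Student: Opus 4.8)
The plan is to prove the mean value inequality by running the standard Moser iteration, which combines the Poincaré inequality \eqref{PI1} and the volume doubling property \eqref{VDP1}. The goal is to control the pointwise value $u^2(p)$ by the $L^2$-average of $u^2$ over $B_p(R)$. Since $u$ is harmonic, $u^2$ is subharmonic (one checks that $L^S(u^2)(x)=\sum_{y\sim x}(u(y)-u(x))^2\geq 0$ using $L^Su=0$), so the real content is a local boundedness estimate for nonnegative subharmonic functions, which is precisely what Moser iteration delivers.

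First I would establish a reverse Poincaré (Caccioppoli-type) inequality: for a subharmonic function $v=u^2$, cutting off with a Lipschitz function $\eta$ supported in a ball and equal to $1$ on a smaller concentric ball, one bounds $\sum (\eta\, \nabla v)^2$ (the Dirichlet energy of $\eta v$) by $\sum v^2 |\nabla \eta|^2$ over the annulus, using summation by parts together with subharmonicity. Next, combining this energy bound with the Poincaré inequality \eqref{PI1} and the volume doubling property \eqref{VDP1} yields a Sobolev-type inequality, which in turn produces a reverse-Hölder (self-improving) estimate of the form
$$
\left(\frac{1}{|B(r_1)|}\sum_{B(r_1)}v^{2\kappa}\right)^{1/\kappa}\leq \frac{C}{(r_2-r_1)^2}\,\frac{1}{|B(r_2)|}\sum_{B(r_2)}v^2,
$$
for some fixed $\kappa>1$ coming from the volume doubling exponent, and any concentric radii $r_1<r_2\leq R$. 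The crucial feature is that the gain in integrability exponent $\kappa$ is \emph{uniform}, depending only on the doubling constant in \eqref{VDP1} and the Poincaré constant in \eqref{PI1}, both of which depend only on $S$.

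Then I would iterate this estimate along a geometrically decreasing sequence of radii $R_i=\frac{R}{2}(1+2^{-i})$ shrinking toward $p$, with exponents $p_i=\kappa^i$ growing to infinity. At each stage the factor $(r_2-r_1)^{-2}$ contributes a power of $2^{2i}$ and the volume ratios contribute a bounded factor by \eqref{VDP1}; because $\sum_i i\,\kappa^{-i}<\infty$ and $\sum_i \kappa^{-i}<\infty$, the accumulated constant is finite and depends only on $S$. Passing to the limit gives
$$
\sup_{B_p(R/2)} v \leq \frac{C_1}{|B_p(R)|}\sum_{x\in B_p(R)} v(x),
$$
and evaluating the supremum at $x=p$ together with $v=u^2$ produces \eqref{MVI}, after absorbing the passage from radius $R/2$ back to $R$ into the doubling constant.

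The main obstacle I expect is the discrete bookkeeping in the Caccioppoli and Sobolev steps: on a graph the Leibniz rule for the gradient $\nabla(\eta v)(x,y)=\eta(x)\nabla v(x,y)+v(y)\nabla\eta(x,y)$ only holds up to cross terms, so the cutoff argument must be done carefully with the edge set $\{x\sim y\}$ and one must control the discrepancy between the ball $B_p(3n)$ appearing on the right of \eqref{PI1} and the smaller ball on the left, which is exactly where volume doubling \eqref{VDP1} is invoked to keep all volume factors comparable. Since Delmotte \cite{D2} and Holopainen-Soardi \cite{HS} have already carried out this Moser iteration on graphs satisfying \eqref{VDP1} and \eqref{PI1}, the cleanest route is to quote their framework and simply verify that a Cayley graph of a group of polynomial volume growth satisfies the hypotheses, which it does by the two displayed lemmas above.
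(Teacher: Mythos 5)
Your proposal is correct and matches the paper's approach: the paper gives no independent proof of Lemma \ref{MVI1}, but simply observes that the mean value inequality is one part of the Moser iteration on graphs satisfying the volume doubling property \eqref{VDP1} and the Poincar\'e inequality \eqref{PI1}, citing Delmotte, Holopainen--Soardi and Coulhon--Grigoryan --- exactly the route you end with. Your additional details (subharmonicity of $u^2$, Caccioppoli inequality, reverse-H\"older iteration) are a correct expansion of what those references carry out.
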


The dimension estimate follows from Li's argument in \cite{L1} (see also \cite{L2,D1,Hu2,HJ}). We need some lemmas.
\begin{lemma}
For any finite dimensional subspace $K\subset H^d(G,S),$ there exists a constant $R_1(K)$ depending on $K$ such that for any $R\geq R_1(K)$
$$A_R(u,v):=\int_{B_p(R)}uv:=\sum_{x\in B_p(R)}u(x)v(x)$$ is an inner product on $K$.
\end{lemma}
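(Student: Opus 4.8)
The plan is to show that $A_R$ is an inner product on $K$ for all large $R$. Since $A_R$ is manifestly symmetric, bilinear, and positive semidefinite (because $A_R(u,u)=\sum_{x\in B_p(R)}u^2(x)\geq0$), the only point requiring proof is positive definiteness once $R$ is large. Equivalently, I must produce $R_1(K)$ so that for every $R\geq R_1(K)$ the restriction map $K\to\mathds{R}^{B_p(R)}$, $u\mapsto u|_{B_p(R)}$, is injective; for then $u\not\equiv0$ on $B_p(R)$ whenever $u\neq0$ in $K$, whence $A_R(u,u)>0$. Note that this argument uses only that $K$ is finite dimensional and that the balls exhaust $G$; the harmonicity of the elements of $K$ plays no role here, which is fortunate since, unlike in the Riemannian setting, no unique continuation principle is available on graphs.

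First I would introduce, for each $R\geq1$, the kernel of the restriction map, $V_R:=\{u\in K\mid u|_{B_p(R)}\equiv0\}$. Since $B_p(R)\subset B_p(R')$ for $R\leq R'$, these subspaces are nested, $V_{R'}\subseteq V_R$, so $\dim V_R$ is a nonincreasing sequence of nonnegative integers. As $K$ is finite dimensional, this sequence stabilizes: there is $R^\ast$ with $\dim V_R=\dim V_{R^\ast}$ for all $R\geq R^\ast$, and by the nesting $V_{R'}\subseteq V_R$ together with equality of dimensions this forces $V_R=V_{R^\ast}$ for all $R\geq R^\ast$.

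It then remains to identify the stabilized subspace $V_{R^\ast}$. Because the Cayley graph is connected, $\bigcup_{R\geq1}B_p(R)=G$, so any $u$ lying in every $V_R$ vanishes identically on $G$; that is, $\bigcap_{R\geq1}V_R=\{0\}$. Since the $V_R$ are nested and stabilize at $V_{R^\ast}$, this intersection equals $V_{R^\ast}$, forcing $V_{R^\ast}=\{0\}$. Consequently $V_R=\{0\}$ for all $R\geq R^\ast$, the restriction map is injective there, and $A_R$ is an inner product on $K$. Setting $R_1(K):=R^\ast$ completes the argument.

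I expect no serious obstacle: the entire content is the descending chain condition for subspaces of the finite-dimensional space $K$, combined with the exhaustion $B_p(R)\uparrow G$. The single point to handle with a little care is the passage from stabilization of the dimensions $\dim V_R$ to stabilization of the subspaces $V_R$ themselves, which is exactly where the nesting $V_{R'}\subseteq V_R$ is used. An alternative, equally short route instead fixes any norm on $K$, assumes for contradiction that for every $R$ there is a unit vector $u_R\in K$ with $A_R(u_R,u_R)=0$, extracts a convergent subsequence by compactness of the unit sphere in finite dimensions, and derives a contradiction with $\bigcap_R V_R=\{0\}$; I prefer the chain-condition argument since it avoids limits altogether.
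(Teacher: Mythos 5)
Your proof is correct and complete. Note that the paper itself gives essentially no details: its proof of this lemma is the single line ``A contradiction argument (see \cite{Hu2})'', and the argument being cited is precisely the compactness alternative you sketch in your last sentences --- assume for every $R$ a unit vector $u_R\in K$ vanishing on $B_p(R)$, use compactness of the unit sphere of the finite-dimensional space $K$ to extract a convergent subsequence, and contradict the fact that the limit vanishes on all of $G$ yet has norm one. Your primary route is genuinely different in execution: instead of normalization, compactness, and a limiting argument, you run a descending-chain argument on the kernels $V_R$ of the restriction maps $K\to\mathds{R}^{B_p(R)}$, using that a nested nonincreasing family of subspaces of a finite-dimensional space stabilizes, and then identify the stable subspace with $\bigcap_{R\geq 1}V_R=\{0\}$ via the exhaustion $B_p(R)\uparrow G$. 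What your argument buys: it needs no norm or topology on $K$ and no sequential compactness, it makes $R_1(K)$ concrete as the stabilization radius of the chain, and it isolates cleanly the observation (which you rightly make) that harmonicity of the elements of $K$ is irrelevant --- the lemma holds for any finite-dimensional space of functions on a connected graph. What the contradiction route buys is mainly brevity and familiarity: it is the standard argument in this literature (Li, Hua), which is presumably why the paper simply cites \cite{Hu2} rather than reproducing it. Both proofs ultimately rest on the same two pillars, finite dimensionality of $K$ and exhaustion of $G$ by the balls $B_p(R)$, so the difference is one of technique rather than of underlying mechanism.
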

\begin{proof} A contradiction argument (see \cite{Hu2}).
\end{proof}

\begin{lemma}\label{LM1}
Let $(G,S)$ be a Cayley graph of a group of polynomial volume growth with homogeneous dimension $D$, $K$ be a $k$-dimensional subspace of $H^d(G,S)$. Given $\beta>1,\delta>0$ for any $R_1'\geq R_1(K)$ there exists $R>R_1'$ such that if $\{u_i\}_{i=1}^k$ is an orthonormal basis of $K$ with respect to the inner product $A_{\beta R}$, then
$$\sum_{i=1}^k A_R(u_i,u_i)\geq k\beta^{-(2d+D+\delta)}.$$
\end{lemma}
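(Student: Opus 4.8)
The plan is to follow Li's counting argument, whose core is a trade-off between two estimates for the quantity $\sum_{i=1}^k A_R(u_i,u_i)$ on the smaller ball $B_p(R)$ against the normalization $\sum_i A_{\beta R}(u_i,u_i)=k$ on the larger ball $B_p(\beta R)$. The statement asserts a \emph{lower} bound on this quantity for \emph{some} radius $R$ in a prescribed range, so the natural strategy is a contradiction/averaging argument: suppose that for every $R>R_1'$ the reverse inequality $\sum_i A_R(u_i,u_i)<k\beta^{-(2d+D+\delta)}$ held, and derive a contradiction with the relative volume comparison.

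The key device is to consider the function $F(R):=\sum_{i=1}^k A_R(u_i,u_i)=\int_{B_p(R)}\big(\sum_i u_i^2\big)$, which is nondecreasing in $R$, and to relate $F(R)$ to $F(\beta R)=k$. First I would establish that the trace $\sum_i u_i^2(x)$, evaluated at any single point, is controlled by the inner-product structure: because the $u_i$ form an $A_{\beta R}$-orthonormal basis, the value $\sum_i u_i^2(x)$ is exactly the "diagonal" of the reproducing kernel and is independent of the choice of orthonormal basis, so at each $x$ we may rotate the basis to bound $\sum_i u_i^2(x)$ by $K\cdot(\text{mean of }u^2\text{ over }B_p(\beta R))$ using the mean value inequality (Lemma \ref{MVI1}). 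Concretely, for a unit vector $u\in K$ one has $u^2(x)\le \tfrac{C_1}{|B_x(\rho)|}\int_{B_x(\rho)}u^2$, and choosing $\rho$ comparable to $\beta R$ so that $B_x(\rho)\subset B_p(2\beta R)$ lets the relative volume comparison \eqref{RVC} convert $|B_x(\rho)|$ into $|B_p(\beta R)|$ up to a factor $(1+\theta)(\cdot)^D$.

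The heart of the argument is then the iteration: I would compare $F(R)$ and $F(\beta R)$ by splitting the dyadic range $[R_1',\beta R]$ into the scales $R_1'\le \beta^{-1}R_*\le R_*$ and applying the pointwise trace bound on each shell. The contradiction hypothesis forces $F$ to grow by a factor strictly larger than $\beta^{2d+D+\delta}$ across \emph{every} such step; iterating over roughly $\log_\beta(\beta R/R_1')$ steps would make $F(\beta R)/F(R_1')$ exceed any prescribed polynomial-in-radius rate. But $F(\beta R)=k$ is fixed while the growth dictated by the polynomial bound $|u_i|(x)\le C(d^S(p,x)+1)^d$ and the volume growth $|B_p(R)|\sim R^D$ only permits $F$ to grow like $R^{2d+D}$; the extra $\delta$ in the exponent is precisely the slack that the relative volume comparison \eqref{RVC}—with its factor $(1+\theta)$ that can be made arbitrarily close to $1$—lets us absorb, so a large-enough radius $R$ violates the assumed bound. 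This is where the delicate Pansu-type estimate \eqref{RVC} is essential rather than just the crude doubling \eqref{VDP1}: the constant must be $(1+\theta)$, not a fixed $C(S)>1$, otherwise the accumulated multiplicative errors over many scales would swamp the $\beta^{-\delta}$ gain.

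The main obstacle I anticipate is controlling the accumulation of the relative-volume error over the many dyadic scales without losing the sharp exponent $D-1$ that Theorem \ref{MTAL} ultimately needs. Handling a single application of \eqref{RVC} is routine, but the counting argument requires that the total distortion stay within a factor $(1+\delta)$-type window, which is exactly why the radius $R$ is only guaranteed to exist beyond some threshold $R_1(K)$ and $R_0(\theta,S)$ rather than for all $R$; pinning down the correct threshold and verifying that $\theta\ll1$ can be chosen (depending on $\beta,\delta$) to make the geometric series of errors negligible is the technically delicate step.
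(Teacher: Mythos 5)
Your overall shape (argue by contradiction and iterate over the scales $\beta^j R_1'$) is the right one, but the proposal has a genuine gap at the chaining step. The contradiction hypothesis at scale $R_j$ says: \emph{if} $\{u_i\}$ is orthonormal with respect to $A_{\beta R_j}$, \emph{then} $\sum_i A_{R_j}(u_i,u_i)<k\beta^{-(2d+D+\delta)}$. The basis appearing here is re-orthonormalized at every scale, so there is no single function $F(R)=\sum_i A_R(u_i,u_i)$ that ``grows by a factor strictly larger than $\beta^{2d+D+\delta}$ across every step'': the trace bound at scale $R_j$ says nothing about the basis used at scale $R_{j+1}$, and traces do not compose multiplicatively across scales. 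What makes the iteration legitimate --- and what the paper's proof means by ``linear algebra'' (Li's argument) --- is the passage to Gram determinants. By the arithmetic--geometric mean inequality, the hypothesis gives
$$\frac{\det\left(A_{R_j}(v_a,v_b)\right)}{\det\left(A_{\beta R_j}(v_a,v_b)\right)}=\det\left(A_{R_j}(u_a,u_b)\right)\leq\left(\frac{1}{k}\sum_{i=1}^k A_{R_j}(u_i,u_i)\right)^k<\beta^{-k(2d+D+\delta)},$$
where $\{v_a\}$ is any fixed basis of $K$; the left-hand ratio is basis-independent, hence these inequalities telescope over $j=0,\dots,m-1$ to give $\det A_{\beta^m R_1'}>\beta^{mk(2d+D+\delta)}\det A_{R_1'}$. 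Then Hadamard's inequality for the positive definite Gram matrix, the growth bound $|u|(x)\leq C(d^S(p,x)+1)^d$, and only the \emph{upper} volume bound in \eqref{BAV2} give $\det A_{\beta^m R_1'}\leq \left(C_K(\beta^m R_1')^{2d+D}\right)^k$, whence $\beta^{mk\delta}\leq C_K^k(R_1')^{k(2d+D)}/\det A_{R_1'}$, which fails for large $m$. Without the determinant device your iteration does not close.

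Second, the tools you lean on are misattributed. The mean value inequality (Lemma \ref{MVI1}) and Pansu's relative volume comparison \eqref{RVC} play no role in this lemma; they are the engine of the companion Lemma \ref{LM2}, where the sharp constant $\epsilon^{-(D-1)}$ is what ultimately produces the exponent $D-1$ in Theorem \ref{MTAL}. Here the crude bound \eqref{BAV2}, with a fixed constant $C_2(S)$, suffices: the volume enters exactly once, at the top scale $\beta^m R_1'$, raised to the fixed power $k$, so there is no per-scale accumulation of volume-comparison errors, and your concern that a constant $C(S)>1$ ``would swamp the $\beta^{-\delta}$ gain'' is a red herring. The $\delta$ slack in the exponent is precisely what absorbs all fixed constants as $m\to\infty$; the $(1+\theta)$ sharpness of \eqref{RVC} is not needed, and should not be claimed as essential, for this statement.
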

\begin{proof}
The lemma follows from the volume growth property \eqref{BAV2} and the linear algebra (see \cite{L1,L2,D1,Hu2}).
\end{proof}

The next lemma follows from the mean value inequality \eqref{MVI} of
Lemma \ref{MVI1} for harmonic functions.
\begin{lemma}\label{LM2}
Let $(G,S)$ be a Cayley graph of a group of polynomial volume growth with homogeneous dimension $D$, $K$ be a $k$-dimensional subspace of $H^d(G,S).$ Then for any fixed $0<\epsilon<\frac{1}{2}$ there exist constants $C(S)$ and $R_2(\epsilon,S)$ such that for any $R\geq R_2(\epsilon,S)$ and any basis $\{u_i\}_{i=1}^k$ of $K$ we have
$$\sum_{i=1}^k A_R(u_i,u_i)\leq C\epsilon^{-(D-1)}\sup_{u\in<A,U>}\int_{B_p(1+\epsilon)R}u^2,$$ where $<A,U>:=\{w=\sum_{i=1}^ka_iu_i|\sum_{i=1}^ka_i^2=1\}.$
\end{lemma}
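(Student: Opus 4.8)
The plan is to estimate the trace quantity $\sum_{i=1}^k A_R(u_i,u_i)$ pointwise, exploiting that for each fixed $x$ the value $\sum_{i=1}^k u_i^2(x)$ equals $\sup_{w\in<A,U>}w^2(x)$, and then applying the mean value inequality to the maximizing harmonic function with a radius that \emph{grows} as $x$ moves into the interior of $B_p((1+\epsilon)R)$. First I would record the elementary linear-algebra identity: writing $v_x:=(u_1(x),\dots,u_k(x))\in\mathds{R}^k$, for $w=\sum_i a_iu_i$ with $\sum_i a_i^2=1$ one has $w(x)=a\cdot v_x$, so
$$\sum_{i=1}^k u_i^2(x)=|v_x|^2=\sup_{\sum_i a_i^2=1}(a\cdot v_x)^2=\sup_{w\in<A,U>}w^2(x),$$
and the supremum is attained by $w_x=\sum_i \tfrac{u_i(x)}{|v_x|}u_i\in K$, which is harmonic on all of $G$. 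Summing over $x$ gives
$$\sum_{i=1}^k A_R(u_i,u_i)=\sum_{x\in B_p(R)}\sum_i u_i^2(x)=\sum_{x\in B_p(R)}w_x^2(x).$$

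Next, for each $x\in B_p(R)$ I would set $r_x:=[(1+\epsilon)R]-d^S(p,x)$, so that $B_x(r_x)\subset B_p((1+\epsilon)R)$ and, for $R\geq R_2(\epsilon,S)$, one has $r_x\geq \epsilon R-1\geq 1$ and in fact $r_x\geq R_0(\theta,S)$. Applying the mean value inequality (Lemma \ref{MVI1}) to $w_x$ on $B_x(r_x)$ and then enlarging the domain of integration yields
$$w_x^2(x)\leq \frac{C_1}{|B_x(r_x)|}\int_{B_x(r_x)}w_x^2\leq \frac{C_1}{|B_x(r_x)|}\int_{B_p((1+\epsilon)R)}w_x^2\leq \frac{C_1\,M}{|B_x(r_x)|},$$
where $M:=\sup_{w\in<A,U>}\int_{B_p((1+\epsilon)R)}w^2$ is precisely the right-hand side of the lemma (the crucial point being that the factor $M$, not $k$, appears, since $w_x\in<A,U>$). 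Using the group-invariance $|B_x(r_x)|=|B_e(r_x)|$ and summing, the lemma reduces to the purely geometric bound
$$\sum_{x\in B_p(R)}\frac{1}{|B_e([(1+\epsilon)R]-d^S(p,x))|}\leq C(S)\,\epsilon^{-(D-1)}.$$

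To prove this inequality I would group the points of $B_p(R)$ according to their distance to the boundary sphere of radius $(1+\epsilon)R$, i.e. into dyadic shells $A_j:=\{x\in B_p(R): 2^j\epsilon R\leq (1+\epsilon)R-d^S(p,x)<2^{j+1}\epsilon R\}$ for $0\leq j\leq C\log(1/\epsilon)$. On $A_j$ the radius satisfies $r_x\approx 2^j\epsilon R$, so the volume growth gives $|B_e(r_x)|\approx C(2^j\epsilon R)^D$, while $A_j$ lies in an annulus of outer radius $R$ and thickness $\approx 2^{j+1}\epsilon R$. Bounding its cardinality by $|B_p(R)|-|B_p(R-2^{j+1}\epsilon R)|$, the contribution of $A_j$ is at most $C|A_j|(2^j\epsilon R)^{-D}$, and once the annulus cardinality is controlled by $(\text{thickness})\times R^{D-1}\approx 2^{j+1}\epsilon R^{D}$ this becomes $C\,2^{j(1-D)}\epsilon^{-(D-1)}$; since $D\geq2$ the geometric series $\sum_{j\geq0}2^{-j(D-1)}$ converges and produces the claimed $C(S)\epsilon^{-(D-1)}$ (the case $D=1$ is trivial).

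The hard part is exactly this last ``surface-type'' estimate $|B_p(R)|-|B_p(R-sR)|\leq C\,s\,R^{D}$, where the gain of one full power of $\epsilon$ lives. With only the two-sided bound \eqref{BAV2} (distinct constants $C_1,C_2$) the difference of the two volumes is of order $R^D$ for every $s$, which would yield only $\epsilon^{-D}$; the sharp asymptotic \eqref{PANS}, equivalently the relative volume comparison \eqref{RVC}, is therefore indispensable, as it forces the leading $\epsilon^{-D}$ terms to cancel. I would phrase the shell estimate through \eqref{RVC} and note that it introduces a spurious error of size $\theta\,R^D$ at every scale; summing against the weight $(2^j\epsilon R)^{-D}$ this error contributes $C\theta\,\epsilon^{-D}$, so one must choose the comparison accuracy $\theta\approx\epsilon$ (hence $R_2(\epsilon,S)=R_0(\epsilon,S)/\epsilon$) to absorb it into the desired order $\epsilon^{-(D-1)}$. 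This interplay between the radius $\epsilon R$ forced by the summation and the accuracy $\theta\approx\epsilon$ forced by the cancellation is the delicate point of the argument.
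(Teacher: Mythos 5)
Your proof is correct, and its first half coincides with the paper's: the linear-algebra identity $\sum_i u_i^2(x)=\sup_{w\in\langle A,U\rangle}w^2(x)$ (the paper phrases this via the codimension-one subspace $K_x=\{u\in K:u(x)=0\}$ and an orthogonal change of basis), followed by the mean value inequality of Lemma \ref{MVI1} on the point-dependent ball of radius $(1+\epsilon)R-r(x)$, reduces the lemma to a purely geometric sum. Where you genuinely diverge is in estimating that sum. The paper keeps the weight $f(r(x))=(1+\epsilon-r(x)/R)^{-D}$, splits off $B_p(R/2)$, and runs an Abel summation by parts over unit-width spheres $\{r(x)=i\}$, invoking Lemma \ref{RVCAI} with accuracy $\theta=C(\epsilon)$ to control the boundary and difference terms; you instead decompose $B_p(R)$ into dyadic annuli $A_j$, prove the surface-type bound $V_p(R)-V_p((1-s)R)\leq C(\theta+s)R^D$ from \eqref{RVC}, and sum the geometric series $\sum_j 2^{-j(D-1)}$. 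Both arguments pivot on exactly the mechanism you identify: the crude bounds \eqref{BAV2} or \eqref{RV1} alone give only $\epsilon^{-D}$, and Pansu's asymptotics must be used with accuracy $\theta\approx\epsilon$ so that the spurious term $\theta\epsilon^{-D}$ is absorbed into $\epsilon^{-(D-1)}$ --- the same choice the paper makes to obtain \eqref{ode4}. Your route is arguably more transparent (it makes the surface-measure heuristic explicit and isolates where $\theta\approx\epsilon$ is forced), at the cost of dyadic bookkeeping that the paper's summation by parts avoids; you also exploit the group-invariance $|B_x(r)|=|B_e(r)|$ where the paper uses \eqref{RV1}, which is immaterial here since the dimension count in Theorem \ref{MT2} is also performed on $G$. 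Two details to patch: for the innermost shells (those with $2^j\epsilon$ of order $1$) the inner radius of the annulus may drop below $R_0(\theta,S)$, so \eqref{RVC} does not apply there, but these $O(1)$ shells contribute $O(1)$ by the trivial bound $|A_j|\leq V_p(R)\leq CR^D$, exactly as the paper handles $B_p(R/2)$ separately; and your geometric series needs $D\geq2$, a restriction you rightly flag as harmless for $D=1$ (the paper's own integration step \eqref{ode5} has the same logarithmic divergence in that case, without comment).
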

\begin{proof} For any $x\in B_R(p),$ we set $K_x=\{u\in K: u(x)=0\}.$ It
  is easy to see that $\dim K/K_x\leq1.$ Hence there exists an orthonormal linear transformation $\varphi:K\rightarrow K,$ which maps $\{u_i\}_{i=1}^k$ to $\{v_i\}_{i=1}^k$ such that $v_i\in K_x,$ for $i\geq2.$ For any $x\in B_p(R),$ since $\epsilon R\geq \epsilon R_2\geq 1$ by choosing $R_2\geq\frac{1}{\epsilon},$ then $(1+\epsilon)R-r(x)\geq 1$ for $r(x)=d(p,x).$
Hence the mean value inequality (\ref{MVI}) implies that for any $x\in B_p(R)$
\begin{eqnarray}\label{SOM1}
\sum_{i=1}^ku_i^2(x)&=&\sum_{i=1}^kv_i^2(x)=v_1^2(x)\nonumber\\
&\leq&C(S)\left|B_x((1+\epsilon)R-r(x))\right|^{-1}\int_{B_x((1+\epsilon)R-r(x))}v_1^2\nonumber\\
&\leq&C(S)\left|B_x((1+\epsilon)R-r(x))\right|^{-1}\sup_{u\in
<A,U>}\int_{B_p(1+\epsilon)R}u^2.
\end{eqnarray}
For simplicity, we denote $V_p(t):=|B_p(t)|.$
By the weak relative volume comparison (\ref{RV1}), we have
\begin{eqnarray*}V_x((1+\epsilon)R-r(x))&\geq&\frac{1}{C(S)}\left(\frac{(1+\epsilon)R-r(x)}{2R}\right)^DV_x(2R)\\
&\geq&\frac{1}{C(S)}\left(\frac{(1+\epsilon)R-r(x)}{2R}\right)^DV_p(R).\end{eqnarray*}
Hence, substituting it into (\ref{SOM1}) and integrating over
$B_p(R)$, we have
\begin{equation}\label{ode1}
\sum_{i=1}^k\int_{B_p(R)}u_i^2\leq \frac{C(S)}{V_p(R)}\sup_{u\in
\langle
A,U\rangle}\int_{B_p(1+\epsilon)R}u^2\int_{B_p(R)}(1+\epsilon-R^{-1}r(x))^{-D}dx
\end{equation}
Define $f(t)=(1+\epsilon-R^{-1}t)^{-D}.$ It suffices to bound the term
\begin{eqnarray}\label{ode2}
\int_{B_p(R)}f(r(x))dx&=&\int_{B_p(R)\backslash B_p(\frac{R}{2})}f+\int_{B_p(\frac{R}{2})}f\nonumber\\
&\leq&\int_{B_p(R)\backslash B_p(\frac{R}{2})}f+CV_p(\frac{R}{2})\nonumber\\
&\leq&\int_{B_p(R)\backslash B_p(\frac{R}{2})}f+CV_p(R)\nonumber\\
&=& (\ast)+CV_p(R)
\end{eqnarray}

We denote by $[R]$ the largest integer less than or equal to $R.$ Then for $[\frac{R}{2}]\geq R_0(\theta,S)$ (e.g. $R\geq 3R_0(\theta,S)$ is sufficient) where $R_0(\theta,S)$ is in Lemma \ref{RVCAI},
\begin{eqnarray}\label{ode3}
(\ast)&\leq&\sum_{[\frac{R}{2}]\leq i\leq [R]}f(i)(V_p(i)-V_p(i-1))\nonumber\\
&=&\sum_{[\frac{R}{2}]\leq i\leq [R]-1}(f(i)-f(i+1))V_p(i)+V_p([R])f([R])-V_p([\frac{R}{2}]-1)f([\frac{R}{2}])\nonumber\\
&\leq& \frac{V_p(R)}{R^D}(1+\theta)^{-1}\sum_{[\frac{R}{2}]\leq i\leq [R]-1}(f(i)-f(i+1))i^D+V_p(R)f([R])\nonumber\\
&\leq& \frac{V_p(R)(1+\theta)^{-1}}{R^D}\Big[\sum_{[\frac{R}{2}]+1\leq i\leq [R]-1} f(i)(i^D-(i-1)^D)+f([\frac{R}{2}])[\frac{R}{2}]^D\nonumber\\
&&-f([R])([R]-1)^D\Big]+V_p(R)f([R])\nonumber\\
&\leq&f([R])V_p(R)\left[1-(1+\theta)^{-1}\left(\frac{[R]-1}{R}\right)^D\right]\nonumber\\
&&+\frac{V_p(R)(1+\theta)^{-1}}{R^D}\sum_{[\frac{R}{2}]+1\leq i\leq [R]-1} f(i)(i^D-(i-1)^D)+CV_p(R)\nonumber\\
&=&I+II+CV_p(R)
\end{eqnarray}
For fixed $0<\epsilon<\frac{1}{2},$ there exist $\theta=C(\epsilon)\ll1$ and $R_1(\epsilon)\gg1$ such that for any $R\geq R_2\geq R_1(\epsilon),$
$$\left|1-(1+\theta)^{-1}\left(\frac{[R]-1}{R}\right)^D\right|\leq \epsilon,$$ hence
\begin{equation}\label{ode4}I\leq CV_p(R)\epsilon^{-D}\cdot\epsilon=CV_p(R)\epsilon^{-(D-1)}.\end{equation}

For the second term,
\begin{eqnarray}\label{ode5}
II&\leq& C(D)(1+\theta)^{-1}\frac{V_p(R)}{R^D}\sum_{[\frac{R}{2}]+1\leq i\leq [R]-1} f(i)i^{D-1}\nonumber\\
&\leq& C\frac{V_p(R)}{R}\sum_{[\frac{R}{2}]+1\leq i\leq [R]-1}f(i)\nonumber\\
&\leq& C\frac{V_p(R)}{R}\int_{[\frac{R}{2}]}^Rf(t)dt\nonumber\\
&\leq& C\epsilon^{-(D-1)}V_p(R).
\end{eqnarray}

Combining the estimates of \eqref{ode2} \eqref{ode3} \eqref{ode4} and \eqref{ode5}, we obtain that for $\theta=C(\epsilon),$ any $R\geq R_2(\epsilon,S)=\max\{\frac{1}{\epsilon},3R_0(\theta,S),R_1(\epsilon)\}$
\begin{eqnarray}\label{ode6}
\int_{B_p(R)}f(r(x))dx&\leq&C(\epsilon^{-(D-1)}+C)V_p(R)\nonumber\\
&\leq& C\epsilon^{-(D-1)}V_p(R).
\end{eqnarray}
The lemma follows from \eqref{ode1} and \eqref{ode6}.

\end{proof}

\begin{proof}[Proof of Theorem \ref{MTAL}] For any $k$-dimensional subspace $K\subset H^d(G,S),$ we set $\beta=1+\epsilon,$ for fixed small $0<\epsilon<\frac{1}{2}$. By Lemma \ref{LM1}, there exists infinitely many $R>R_1(K)$ such that for any orthonormal basis $\{u_i\}_{i=1}^k$ of $K$ with respect to $A_{(1+\epsilon) R},$ we have
$$\sum_{i=1}^k A_R(u_i,u_i)\geq k(1+\epsilon)^{-(2d+D+\delta)}.$$
Lemma \ref{LM2} implies that for sufficiently large $R$ $$\sum_{i=1}^k A_R(u_i,u_i)\leq C(S)\epsilon^{-(D-1)}.$$ Setting $\epsilon=\frac{1}{2d},$ and letting $\delta\rightarrow 0,$ we obtain
\begin{equation}k\leq C(S)\left(\frac{1}{2d}\right)^{-(D-1)}\left(1+\frac{1}{2d}\right)^{2d+D+\delta}\leq C(S)d^{D-1}.\end{equation} which proves the theorem.
\end{proof}

\section{Rough isometries and optimal dimension estimate}
In this section we obtain the optimal dimension estimate for
polynomial growth harmonic functions on graphs with bounded geometry
roughly isometric to Cayley graphs of groups of polynomial volume
growth. The strategy is similar to that in \cite{HJ}. Let $(X,d^X)$ be
a graph with bounded geometry roughly isometric to a Cayley graph
$(G,S,d^S)$ of a group of polynomial volume growth. Since the rough
isometry does not preserve the optimal volume growth condition
\eqref{RVC}, it is hard to get the optimal dimension estimate by the
argument on the graph $X$. Instead of doing that, we construct
functions on $(G,S)$ from harmonic functions on $X$ which are not
harmonic on $(G,S)$ but satisfy the mean value inequality in the large
\eqref{MVF1}. By the same arguments as in Section 2, we obtain the optimal dimension estimate for functions on $(G,S)$ which implies the dimension estimate on $X.$

In order to preserve the mean value inequality under our later construction, we need the following lemma which says that we can always construct an injective rough isometry $\phi':X\rightarrow G'$ from a rough isometry to a Cayley graph $(G,S)$ of a group of polynomial volume growth $\phi:X\rightarrow G,$ where $G'$ is also of polynomial volume growth with same homogenous dimension as $G.$
\begin{lemma}
Let $(X,d^X)$ be a graph with bounded geometry roughly isometric to a Cayley graph $(G,S,d^S)$ of a group of polynomial volume growth, $\phi:X\rightarrow G$ be the rough isometry. Then there exist a group $G'$ of polynomial volume growth with same homogenous dimension as $G$ and an injective rough isometry $\phi':X\rightarrow G'$ which is constructed from $\phi.$
\end{lemma}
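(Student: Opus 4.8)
The plan is to enlarge the target group by a finite direct factor that provides just enough room to separate the points $\phi$ collapses, and then to lift $\phi$ into this enlarged group.

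First I would bound the fibers of $\phi$ uniformly. If $\phi(x)=\phi(x')=g$, then $d^S(\phi(x),\phi(x'))=0$, so the lower bound in the rough isometry inequality forces $a^{-1}d^X(x,x')-b\le 0$, i.e. $d^X(x,x')\le ab$. Thus every fiber $\phi^{-1}(g)$ is contained in a ball of radius $ab$ in $X$; since $X$ has bounded geometry ($\deg\le\Delta$), any such ball contains at most $N:=1+\Delta+\Delta^2+\cdots+\Delta^{ab}$ vertices. Hence $|\phi^{-1}(g)|\le N$ for all $g\in G$. This is the one step that genuinely uses both hypotheses, the lower Lipschitz bound of the rough isometry and the bounded geometry of $X$.

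Next I would set $G':=G\times F$, where $F$ is a finite group with $|F|\ge N$ (for instance $F=\mathds{Z}/N\mathds{Z}$). Since $G\times\{e_F\}$ has finite index $|F|$ in $G'$, the group $G'$ is again of polynomial volume growth with the same homogeneous dimension $D$. I would equip $G'$ with the symmetric generating set $S':=\big((S\cup\{e_G\})\times F\big)\setminus\{(e_G,e_F)\}$. With this choice a single generator can move the $G$-coordinate by an element of $S$ while simultaneously resetting the $F$-coordinate arbitrarily, so that projecting any $S'$-path onto $G$ never increases its length, while the $F$-coordinate can be fixed in a single extra step. This yields the comparison $|d^{S'}((g_1,f_1),(g_2,f_2))-d^S(g_1,g_2)|\le 1$, and in particular the projection $\pi:(G',d^{S'})\to(G,d^S)$ is a surjective rough isometry.

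Finally I would define $\phi'$ as a lift of $\phi$: for each $g\in G$ fix an injection of the fiber $\phi^{-1}(g)$ into $F$ (possible since $|\phi^{-1}(g)|\le N\le|F|$) and set $\phi'(x)=(\phi(x),f_x)$ accordingly, so that $\pi\circ\phi'=\phi$. Then $\phi'$ is injective, since two points with the same $\phi'$-image lie in a common fiber of $\phi$, on which the $F$-coordinates were chosen distinct. The rough isometry axioms for $\phi'$ then transport through $\pi$: from $|d^{S'}(\phi'(x),\phi'(y))-d^S(\phi(x),\phi(y))|\le1$ and the $(a,b)$-bounds for $\phi$ one gets $a^{-1}d^X(x,y)-(b+1)\le d^{S'}(\phi'(x),\phi'(y))\le a\,d^X(x,y)+(b+1)$, and the coarse density of $\phi'$ in $G'$ follows from that of $\phi$ in $G$ via $d^{S'}((\phi(x),f),(g,f'))\le d^S(\phi(x),g)+1$. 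The only delicate point is the design of $S'$ so that the finite factor is metrically negligible, preserving both the homogeneous dimension $D$ and the rough-isometry constants up to an additive $1$; everything else is bookkeeping with constants.
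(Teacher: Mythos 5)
Your proposal is correct and follows essentially the same route as the paper: bound the fiber cardinality via the lower rough-isometry inequality plus bounded geometry, pass to $G'=G\times\mathds{Z}_q$ (which has finite index over $G$, hence the same homogeneous dimension), and lift $\phi$ injectively by distributing each fiber into the finite factor so that $\pi_G\circ\phi'=\phi$. The only difference is cosmetic: the paper generates $G'$ by $S\times\{0\}\cup\{(e,\pm1)\}$ while you use $\bigl((S\cup\{e_G\})\times F\bigr)\setminus\{(e_G,e_F)\}$, and you verify the resulting rough-isometry constants explicitly where the paper calls this step straightforward.
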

\begin{proof}
Since $X$ is a graph with bounded geometry, $\deg x\leq \Delta$ for all $x\in X.$ Assume that $\phi:X\rightarrow G$ is an $(a,b)$-rough isometry which is generally not injective. For any $y\in \phi(X),$ by rough isometry $\diam\{\phi^{-1}(y)\}\leq ab.$ Hence by bounded geometry of $X,$
\begin{equation}\label{Boundgeom1}
\#\{\phi^{-1}(y)\}\leq \Delta^{[ab]+1}=:q(\Delta,a,b).
\end{equation}

Define a group $G':=G\times\mathds{Z}_q$ where $\mathds{Z}_q=\{0,1,\cdots,q-1\}$ is a finite cyclic group and $\times$ is the direct product of groups (see \cite{La}). Let $S':=S\times\{0\}\cup(e,\pm1),$ where $e$ is the unit element of $G.$ It is easy to see that $S'$ is a generating set of $G'$ and $(G',S')$ and $(G,S)$ are of polynomial volume growth with same homogenous dimension. By \eqref{Boundgeom1}, it is straightforward to define an injective rough isometry $\phi':X\rightarrow G'=G\times\mathds{Z}_q$ such that $$\pi_G\circ\phi'=\phi,$$ where $\pi_G$ is a standard projection map $\pi_G:G'\rightarrow G.$ Actually it suffices to separate the image under $\phi'$ of $\phi^{-1}(y)$ into $y\times \mathds{Z}_q$ for any $y\in\phi(X).$
\end{proof}

By this lemma, we always assume that (to keep the notations) there is an injective $(a,b)$-rough isometry $\phi: X\rightarrow G$ to a Cayley graph of a group of polynomial volume growth. The Laplacian operator $L^X$ is defined for any function $u$ on $X$ as
$$L^Xu(x):=\sum_{y\sim x}(u(y)-u(x)).$$ We denote by $H^d(X):=\{u:X\rightarrow \mathds{R}| L^Xu=0, |u|(x)\leq C(d^X(x,p)+1)^d\}$ the space of polynomial growth harmonic functions on $X$ with growth rate less than or equal to $d,$ by $P^d(G):=\{u:G\rightarrow \mathds{R}|\ |u|(x)\leq C(d^S(x,p_0)+1)^d\}$ the space of polynomial growth functions on $G$ with growth rate less than or equal to $d,$ where $p\in X$ and $p_0\in G$ are fixed. For a fixed injective rough isometry $\phi: X\rightarrow G,$ we construct a function $\widetilde{u}$ on $G$ from a function $u:X\rightarrow \mathds{R}$ as follows. For any $y\in \phi(X),$ let $\widetilde{u}(y):=u(\phi^{-1}(y)).$ For $y\in G\backslash\phi(X),$ let $W_y:=\phi^{-1}(\phi(X)\cap B_b^G(y))\subset X$ and \begin{equation}\label{dfi1}\widetilde{u}(y):=\frac{1}{|W_y|}\sum_{x\in W_y}u(x),\end{equation} where $|W_y|:=\# W_y.$ We define
$$E:H^d(X)\rightarrow P^d(G)$$
$$u\mapsto Eu:=\widetilde{u}.$$ It is easy to see that $E$ is an injective linear operator. Hence $$\dim H^d(X)=\dim EH^d(X).$$ To estimate $\dim H^d(X),$ it suffices to bound the dimension of $EH^d(X).$ The advantage of estimating $\dim EH^d(X)$ on $G$ is the good volume growth property \eqref{RVC}. Next we shall prove the crucial property (mean value inequality) of $Eu=\widetilde{u}$ for any harmonic function $u$ on $X.$

As in Lemma \ref{MVI1}, the mean value inequality for harmonic
functions on $X$ follows from the volume doubling property
\eqref{OXVD1} and the Poincar\'e inequality \eqref{RIPI1} (see
\cite{D2,HS,CoG}).
\begin{lemma}\label{MVFXX}
Let $(X,d^X)$ be a graph with bounded geometry roughly isometric to a Cayley graph $(G,S,d^S)$ of a group of polynomial volume growth. Then there exists a constant $C_1$ such that for any $p\in X, R\geq1,$ any harmonic function $u$ on $B_p^X(R)$ we have
\begin{equation}\label{MVIOX1}
u^2(p)\leq \frac{C_1}{|B_p^X(R)|}\sum_{x\in B_p^X(R)}u^2(x).
\end{equation}
\end{lemma}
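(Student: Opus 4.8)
The plan is to establish \eqref{MVIOX1} by the same mechanism that yields its Cayley-graph analogue, Lemma~\ref{MVI1}, namely Moser iteration fed by the two structural inputs that survive the rough isometry: the volume doubling property \eqref{OXVD1} and the scaled Poincar\'e inequality \eqref{RIPI1}. First I would reduce to a statement about nonnegative subharmonic functions. Since $u$ is harmonic, the identity $u^2(y)-u^2(x)=(u(y)-u(x))^2+2u(x)(u(y)-u(x))$ summed over $y\sim x$ gives
\[
L^X(u^2)(x)=\sum_{y\sim x}(u(y)-u(x))^2+2u(x)L^Xu(x)=\sum_{y\sim x}(u(y)-u(x))^2\ge 0,
\]
so $v:=u^2$ is nonnegative and subharmonic on $B_p^X(R)$. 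It then suffices to prove the sup-bound $\sup_{B_p^X(R/2)}v\le \frac{C}{|B_p^X(R)|}\sum_{x\in B_p^X(R)}v(x)$ for such $v$; evaluating at $x=p\in B_p^X(R/2)$ yields \eqref{MVIOX1}.

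Next I would turn the two inputs into a single Sobolev inequality on a ball. Volume doubling \eqref{OXVD1} supplies an exponent $\nu$ with $|B_p^X(r)|\ge c\,(r/R)^\nu|B_p^X(R)|$ for $r\le R$, and combining this lower volume growth with \eqref{RIPI1} produces, in the standard way (on graphs Delmotte \cite{D2} and Holopainen--Soardi \cite{HS}), an $L^2$ Sobolev inequality of the form
\[
\left(\frac{1}{|B_p^X(R)|}\sum_{x\in B_p^X(R)}|f(x)|^{2\kappa}\right)^{1/\kappa}\le \frac{C R^2}{|B_p^X(R)|}\sum_{\substack{x,y\in B_p^X(C_1R)\\ x\sim y}}(f(x)-f(y))^2+\frac{C}{|B_p^X(R)|}\sum_{x\in B_p^X(R)}f^2(x),
\]
with a Sobolev gain $\kappa>1$. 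With this in hand the iteration is routine: for $q\ge1$ and a cutoff $\eta$ equal to $1$ on $B_p^X(r')$ and vanishing outside $B_p^X(r)$ I would test the subharmonicity of $v$ against $\eta^2 v^{2q-1}$ and sum by parts to get a discrete Caccioppoli bound on the Dirichlet energy of $\eta v^q$ by $C(r-r')^{-2}$ times the mass of $v^{2q}$, feed this into the Sobolev inequality to obtain a reverse-H\"older gain
\[
\left(\frac{1}{|B_p^X(r')|}\sum_{x\in B_p^X(r')}v^{2q\kappa}(x)\right)^{1/\kappa}\le \frac{Cr^2}{(r-r')^2}\,\frac{|B_p^X(r)|}{|B_p^X(r')|}\cdot\frac{1}{|B_p^X(r)|}\sum_{x\in B_p^X(r)}v^{2q}(x),
\]
and then iterate with $q_i=\kappa^i$ and radii $r_i=\tfrac{R}{2}(1+2^{-i})$ shrinking from $R$ to $R/2$. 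The product of the step constants converges because $\sum_i i\kappa^{-i}<\infty$, while doubling \eqref{OXVD1} keeps every ratio $|B_p^X(r_i)|/|B_p^X(r_{i+1})|$ uniformly bounded, so letting $i\to\infty$ gives the desired sup-bound.

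The hard part will be the enlargement constant $C_1$ in \eqref{RIPI1}: the gradient term is summed over the larger ball $B_p^X(C_1R)$ while the function term lives on $B_p^X(R)$, and this \emph{weak} form of the Poincar\'e inequality does not telescope directly through the Moser scheme. The remedy I would invoke, rather than reprove, is the self-improvement of \eqref{RIPI1} into a genuine same-ball Poincar\'e (hence Sobolev) inequality via a covering/chaining argument built on volume doubling; on graphs this is exactly what Delmotte \cite{D2} carries out, and it is the content of the references \cite{D2,HS,CoG} already cited for the analogous Lemma~\ref{MVI1}. Everything else --- the discrete summation by parts and the convergence of the iteration constants --- is formally identical to the group case and carries over once the same-ball Sobolev inequality is available.
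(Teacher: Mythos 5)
Your proposal is correct and takes essentially the same route as the paper: the paper's entire proof of Lemma~\ref{MVFXX} is the remark that the rough-isometry-invariant inputs, volume doubling \eqref{OXVD1} and the Poincar\'e inequality \eqref{RIPI1}, feed the graph Moser iteration of \cite{D2,HS,CoG}, which is exactly the scheme you sketch (with the added detail of the subharmonicity of $u^2$, the Caccioppoli/Sobolev steps, and the weak-to-strong Poincar\'e self-improvement). The only steps you defer to the literature are the same ones the paper itself defers to those references, so there is no gap.
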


The proof of the following lemma is essentially the same as that of Lemma \ref{LMVD1}.
\begin{lemma}
Let $\phi:X\rightarrow G$ be an injective $(a,b)$-rough isometry to a Cayley graph $(G,S)$ of a group of polynomial volume growth. Then there exist constants $C_1<1, C_2$ and $R_1$ such that for any $y\in G,p\in X$ satisfying $d^S(\phi(p),y)\leq b$ and $R\geq R_1$ we have
\begin{equation}\label{VL1}
|B_p^X(R)|\geq C_2|B_y^G(C_1R)|.
\end{equation}
\end{lemma}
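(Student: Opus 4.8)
The plan is to compare the two balls directly through $\phi$, using only the distortion bounds of Definition \ref{DEFR} together with the $b$-density of $\phi(X)$ in $G$; this is exactly the lower-bound half of the comparison behind Lemma \ref{LMVD1}. Concretely, I would construct an explicit finite-to-one map $\pi\colon B_y^G(C_1R)\to B_p^X(R)$ and then count fibers.

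First I would build the map. For each $z\in B_y^G(C_1R)$, the surjectivity-up-to-$b$ clause of the rough isometry gives some $x\in X$ with $d^S(z,\phi(x))\le b$; choose one and set $\pi(z)=x$. I then verify that $\pi(z)\in B_p^X(R)$ once $C_1$ and $R_1$ are chosen correctly. From $d^S(\phi(p),y)\le b$ and $z\in B_y^G(C_1R)$ the triangle inequality gives $d^S(z,\phi(p))\le C_1R+b$, whence $d^S(\phi(\pi(z)),\phi(p))\le C_1R+2b$. Inserting this into the lower distortion bound $a^{-1}d^X(\pi(z),p)-b\le d^S(\phi(\pi(z)),\phi(p))$ yields $d^X(\pi(z),p)\le aC_1R+3ab$. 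Taking $C_1=\tfrac{1}{2a}$, which is both $<1$ and $<1/a$, forces $d^X(\pi(z),p)\le \tfrac12 R+3ab$, and this is at most $R$ as soon as $R\ge R_1:=6ab$. Thus $\pi$ is well defined into $B_p^X(R)$.

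Next I would bound the fibers. If $\pi(z)=x$ then $z\in B_{\phi(x)}^G(b)$, so $\pi^{-1}(x)$ is contained in a $G$-ball of radius $b$; since $(G,S)$ is a Cayley graph, every such ball has the same cardinality $\beta_S(b)$, a constant depending only on $b$ and $S$. Hence $\pi$ is at most $\beta_S(b)$-to-one, and summing over $B_p^X(R)$ gives
$$|B_y^G(C_1R)|=\sum_{x\in B_p^X(R)}|\pi^{-1}(x)|\le \beta_S(b)\,|B_p^X(R)|,$$
which is precisely \eqref{VL1} with $C_2=\beta_S(b)^{-1}$.

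The computation is routine; the only delicate point is the interplay between the multiplicative distortion $a$ and the additive distortion $b$. Because $C_1$ must be strictly below $1$ while the image is still required to land inside $B_p^X(R)$, one cannot push $C_1$ arbitrarily close to $1/a$ uniformly in $R$: the additive error $3ab$ has to be absorbed, which is exactly why a threshold $R\ge R_1$ appears and why the estimate holds only in the large. I note that injectivity of $\phi$ is not actually needed for this particular inequality; it is carried along only to match the standing assumption.
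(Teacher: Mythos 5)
Your proof is correct and essentially identical to the paper's: your map $\pi$ is exactly the rough inverse $\psi$ of \eqref{quai} normalized so that $\psi(y)=p$ (the paper quotes that $\psi$ is an $(a,3ab)$-rough isometry where you re-derive the bound $d^X(\pi(z),p)\leq aC_1R+3ab$ directly from the distortion bounds of $\phi$), and both arguments conclude with the same constants $C_1=\frac{1}{2a}$, $R_1=6ab$ and the same fiber count, namely that each fiber lies in a $G$-ball of radius $b$ and hence has at most $\beta_S(b)$ elements. Your closing observation that injectivity of $\phi$ is not used here is also consistent with the paper, which invokes injectivity only later, in the proof of Theorem \ref{MVIIL}.
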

\begin{proof}
For fixed $y\in G, p\in X$ satisfying $d^S(\phi(p),y)\leq b,$ by \eqref{quai} we can construct an inverse $(a,3ab)$-rough isometry $\psi=\psi_{y,p}:G\rightarrow X$ from $\phi$ such that $\psi(y)=p.$ We claim that $\psi (B_y^G(C_1 R))\subset B_p^X(R)$ for some $C_1<1$ and any $R\geq R_1.$ For any $q\in \psi(B_y^G(C_1R))$ ($C_1$ to be chosen), $q=\psi(z)$ for some $z\in B_y^G(C_1R).$ Then
\begin{eqnarray*}
d^X(p,q)&=& d^X(\psi(y),\psi(z))\leq ad^S(y,z)+3ab\\
&\leq& aC_1R+3ab\leq R,
\end{eqnarray*} if we choose $C_1=\frac{1}{2a}, R_1=6ab$ which yields the claim.

By the bounded geometry of $G,$ for any $s\in X,$ $\#\{\psi^{-1}(s)\}\leq C.$ Hence
$$|B_p^X(R)|\geq\#\psi(B_y^G(C_1R))\geq\frac{1}{C}|B_y^G(C_1R)|.$$ The lemma follows.
\end{proof}

The mean value inequality for $Eu=\widetilde{u}$ follows from the previous two lemmas and the injectivity of the rough isometry $\phi.$
\begin{theorem}[Mean value inequality in the large]\label{MVIIL}
Let $\phi:X\rightarrow G$ be an injective $(a,b)$-rough isometry to a Cayley graph of a group of polynomial volume growth. Then there exist constants $C, R_0$ such that
for any harmonic function $u$ on $X,$ any $y\in G$ and $R\geq R_0$ we have
\begin{equation}\label{MVF1}
\widetilde{u}^2(y)\leq \frac{C}{|B_y^G(R)|}\sum_{x\in B_y^G(R)}\widetilde{u}^2(x).
\end{equation}
\end{theorem}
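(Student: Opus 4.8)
The plan is to transfer the mean value inequality \eqref{MVIOX1} from the harmonic function $u$ on $X$ to the constructed function $\widetilde{u}$ on $G$, using the three earlier results: the mean value inequality on $X$ (Lemma \ref{MVFXX}), the volume comparison \eqref{VL1} relating $X$-balls and $G$-balls, and critically the injectivity of $\phi$. I distinguish the two cases in the definition \eqref{dfi1}. First consider $y \in \phi(X)$, so $\widetilde{u}(y) = u(p)$ where $p = \phi^{-1}(y)$. I would apply \eqref{MVIOX1} on the ball $B_p^X(C_1 R)$ (with $C_1$ the constant from \eqref{VL1}), giving $\widetilde{u}^2(y) = u^2(p) \leq \frac{C}{|B_p^X(C_1 R)|}\sum_{x \in B_p^X(C_1 R)} u^2(x)$. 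The task is then to re-express the right-hand side as a sum of $\widetilde{u}^2$ over a $G$-ball.

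The key step is relating the $X$-sum to a $G$-sum. Since $\phi$ is injective, each vertex $x \in B_p^X(C_1 R)$ maps to a distinct vertex $\phi(x) \in G$, and by the rough isometry $d^S(y, \phi(x)) \leq a\, d^X(p,x) + b \leq aC_1 R + b \leq R$ for $R$ large (taking $C_1 = \frac{1}{2a}$ as in \eqref{VL1}, this is $\leq R/2 + b \leq R$). Hence $\phi(B_p^X(C_1 R)) \subset B_y^G(R) \cap \phi(X)$, and because $\widetilde{u}(\phi(x)) = u(x)$ on $\phi(X)$, injectivity gives $\sum_{x \in B_p^X(C_1 R)} u^2(x) = \sum_{z \in \phi(B_p^X(C_1 R))} \widetilde{u}^2(z) \leq \sum_{z \in B_y^G(R)} \widetilde{u}^2(z)$. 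Combining with \eqref{VL1}, which gives $|B_p^X(C_1 R)| \geq C_2 |B_y^G(C_1^2 R)| \geq C_2' |B_y^G(R)|$ after one more application of the weak volume doubling \eqref{OXVD1}/\eqref{RV1} to absorb the constant factor $C_1^2$, yields exactly \eqref{MVF1} with an adjusted constant.

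The second case, $y \in G \setminus \phi(X)$, is where the main obstacle lies: here $\widetilde{u}(y)$ is the average $\frac{1}{|W_y|}\sum_{x \in W_y} u(x)$ over $W_y = \phi^{-1}(\phi(X) \cap B_b^G(y))$, which is a genuine average of nearby $X$-values rather than a single harmonic value. By Cauchy--Schwarz, $\widetilde{u}^2(y) \leq \frac{1}{|W_y|}\sum_{x \in W_y} u^2(x)$, so it suffices to control each $u^2(x)$ for $x \in W_y$. For each such $x$ one has $d^S(\phi(x), y) \leq b$, so $\phi(x)$ satisfies the hypothesis of \eqref{VL1} with $p = x$; applying the first case at the point $\phi(x)$ (or directly the mean value inequality on $B_x^X(C_1 R)$) gives $u^2(x) \leq \frac{C}{|B_y^G(R)|}\sum_{z \in B_{\phi(x)}^G(R)} \widetilde{u}^2(z)$, and since $d^S(\phi(x), y) \leq b$ one has $B_{\phi(x)}^G(R) \subset B_y^G(R+b) \subset B_y^G(2R)$, reducing to a $B_y^G(2R)$-sum that is controlled by a $B_y^G(R)$-sum via \eqref{OXVD1}. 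Averaging over $x \in W_y$ then produces \eqref{MVF1}. The care needed is that the radii and constants ($C_1$, the shift by $b$, the doubling factors) must be chosen uniformly in $y$ and $R$; choosing $R_0$ large enough that all the ``in the large'' volume estimates \eqref{VL1} apply, and enlarging the working radius by a fixed factor so that doubling absorbs every constant, closes the argument.
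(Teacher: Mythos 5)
Your proposal is correct and follows essentially the same route as the paper's proof: reduce $\widetilde{u}^2(y)$ to values $u^2(p)$ with $p\in W_y$, apply the mean value inequality on $X$ (Lemma \ref{MVFXX}), use the injectivity of $\phi$ to convert the sum over an $X$-ball into a sum of $\widetilde{u}^2$ over a dilated $G$-ball, and fix the volumes via \eqref{VL1} and doubling; the paper merely avoids your case distinction by taking $p$ to be the maximum point of $u^2$ in $W_y$, which handles $y\in\phi(X)$ and $y\in G\setminus\phi(X)$ simultaneously. One sentence of yours is false as literally stated: a $B_y^G(2R)$-sum of $\widetilde{u}^2$ is \emph{not} controlled by a $B_y^G(R)$-sum via volume doubling (doubling bounds volumes, not sums of a function; note also that \eqref{OXVD1} is doubling on $X$, whereas the balls here lie in $G$, so \eqref{VDP1} is the relevant inequality) --- the correct repair, which your closing sentence and the paper's proof both actually use, is to enlarge the \emph{denominator} to $|B_y^G(2R)|$ by doubling on $G$, so that the inequality holds with the larger radius on both sides, and then rename the radius (this is the paper's final step $R_0=CR_2$).
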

\begin{proof}
By the group structure of $G,$ for any $y\in G,$ $|B^G_y(b)|=\beta(b)\leq C(b).$ The bounded geometry of $X$ implies that $\#\phi^{-1}(B^G_y(b))\leq C(\Delta,a,b).$
Then by the definition of $\widetilde{u}$ \eqref{dfi1}, there exists $p\in W_y$ (e.g. the maximum point for $u^2$ in $W_y$) such that for any $R\geq R_1$
\begin{eqnarray*}
\widetilde{u}^2(y)&\leq& C(\Delta,a,b)u^2(p)\\
&\leq&\frac{C}{|B_p^X(R)|}\sum_{x\in B_p^X(R)}u^2(x) \ \ \ \ \ \  by\ \eqref{MVIOX1}\\
&\leq&\frac{C}{|B_y^G(C_1R)|}\sum_{x\in B_p^X(R)}u^2(x) \ \ \  \ by\  \eqref{VL1}.
\end{eqnarray*}
For any $z\in \phi(B_p^X(R)),$ $z=\phi(q)$ where $q\in B^X_p(R).$ Then
\begin{eqnarray*}
d^G(z,y)&\leq&d^G(z,\phi(p))+d^G(\phi(p),y)\leq ad^X(p,q)+b+b\\
&\leq& a R+2b\leq CR,
\end{eqnarray*} if we choose $C=2a, R\geq \frac{2b}{a}$ which implies that
$\phi(B_p^X(R))\subset B_y^G(CR).$ Hence, for $R\geq R_2=\max\{1,R_1,\frac{2b}{a}\}$
\begin{eqnarray*}
\widetilde{u}^2(y)&\leq&\frac{C}{|B_y^G(C_1R)|}\sum_{x\in B_p^X(R)}u^2(x)\\
&\leq&\frac{C}{|B_y^G(C_1R)|}\sum_{w\in B_{CR}^G(y)\cap \phi(X)}\widetilde{u}^2(w)\\
&\leq&\frac{C}{|B_y^G(C_1R)|}\sum_{w\in B_{CR}^G(y)}\widetilde{u}^2(w)\\
&\leq&\frac{C}{|B_y^G(CR)|}\sum_{w\in B_{CR}^G(y)}\widetilde{u}^2(w),
\end{eqnarray*} where we use the injectivity of $\phi$ and the volume doubling property \eqref{VDP1} on $G.$ The theorem follows from choosing $(R_0=CR_2).$
\end{proof}

\begin{proof}[Proof of Theorem \ref{MT2}] Since we have obtained the
  mean value inequality in the large \eqref{MVF1}, combining with the
  relative volume comparison in the large \eqref{RVC}, we prove the
  theorem by the same argument as in  Section 2 (see \cite{HJ}).
\end{proof}

As an application, we obtain the dimension estimate for the quasi-transitive graphs (see Woess \cite{Wo}). Let $(X,d^X)$ be a locally finite, connected graph. An automorphism of $X$ is a self-isometry of $(X,d^X).$ We denote by $Aut(X)$ the set of automorphisms of $X.$ The graph $X$ is called vertex transitive if $Aut(X)$ acts transitively on $X,$ i.e. the factor (quotient) graph $X\slash Aut(X)$ has only one orbit. It is called quasi-transitive if $Aut(X)$ acts with finitely many orbits. It is easy to see that transitive and quasi-transitive graphs possess bounded geometry. We recall a theorem in \cite{Wo} (Theorem 5.11).
\begin{lemma}\label{LH11}
Let $(X,d^X)$ be a quasi-transitive graph whose growth function satisfies $|B_p^X(n)|\leq Cn^A$ for infinitely many $n$ and some $A>0.$ Then $X$ is roughly isometric to a Cayley graph of some finitely generated nilpotent group with homogenous dimension $D.$
\end{lemma}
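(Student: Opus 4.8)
The plan is to promote the growth hypothesis on the graph to a group via a cocompact action, apply Gromov's theorem, and transfer back by rough isometry. First I would equip the full automorphism group $G:=\mathrm{Aut}(X)$ with the permutation topology (pointwise convergence). Since $X$ is connected and locally finite, $G$ is a totally disconnected locally compact group in which each vertex stabilizer is a compact open subgroup, and quasi-transitivity means that $G$ acts on $X$ with finitely many orbits, i.e. cocompactly. Connectedness of $X$ together with cocompactness forces $G$ to be compactly generated, and fixing a base vertex $x_0$, the orbit map $g\mapsto gx_0$ is a rough isometry between $G$ (with the word metric attached to a compact generating set) and $X$. This is the locally compact version of the \v{S}varc--Milnor lemma, and it already shows that $G$ and $X$ share the same growth type.

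Next I would transport the growth bound. Because $g\mapsto gx_0$ is a rough isometry, the hypothesis $|B_p^X(n)|\leq Cn^A$ along infinitely many $n$ yields the same subsequential polynomial control for the growth function of $G$. The growth function of a compactly generated locally compact group is submultiplicative, so by the analogue for such groups of the Van den Dries--Wilkie one-scale criterion recalled above, this upgrades to genuine polynomial growth of $G$ of some integer degree $D$.

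Now comes the structural heart of the argument. I would invoke the extension of Gromov's polynomial growth theorem to locally compact groups (Losert, building on Gromov and Trofimov): a compactly generated locally compact group of polynomial growth admits a compact normal subgroup $K$ such that $G/K$ is a virtually nilpotent Lie group. From this structure one extracts, via Malcev--Mostow, a finitely generated virtually nilpotent cocompact lattice $\Gamma$, whose Cayley graph is roughly isometric to $G/K$, hence to $G$, hence to $X$. Passing to a finite-index nilpotent subgroup $\Gamma_0\leq\Gamma$ --- which is finitely generated and whose Cayley graph is roughly isometric to that of $\Gamma$ --- produces a finitely generated nilpotent group roughly isometric to $X$. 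Since the polynomial growth degree is a rough-isometry invariant, the homogeneous dimension of $\Gamma_0$ given by Bass's formula on its lower central series equals the growth degree $D$ of $X$, which is the asserted $D$.

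The step I expect to be the main obstacle is the structural one: manufacturing a finitely generated group out of $\mathrm{Aut}(X)$. A priori $\mathrm{Aut}(X)$ is neither discrete nor finitely generated, and indeed a general quasi-transitive graph need not be roughly isometric to any Cayley graph; the polynomial growth hypothesis is exactly what rescues the situation, through the locally compact generalization of Gromov's theorem and the extraction of a cocompact lattice. A more combinatorial route that sidesteps the locally compact machinery is Trofimov's theorem, which provides an $\mathrm{Aut}(X)$-invariant block system with finite blocks whose quotient graph has finite vertex stabilizers and is therefore a Cayley graph roughly isometric to $X$; either way, all the depth is concentrated in this passage from the graph to a finitely generated group.
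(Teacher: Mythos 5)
The paper does not actually prove this lemma: it is quoted verbatim from Woess's book (Theorem 5.11 there), and the proof of record runs through Trofimov's structure theorem for quasi-transitive graphs of polynomial growth --- an $\mathrm{Aut}(X)$-invariant partition into finite blocks whose quotient graph admits a quasi-transitive automorphism group with finite vertex stabilizers, hence is roughly isometric to a Cayley graph of a finitely generated group acting on it; van den Dries--Wilkie, Gromov and Bass then finish. This is exactly the ``more combinatorial route'' you relegate to your closing paragraph, so your fallback plan is the cited proof, while the locally compact argument is your own and must stand on its own feet.

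As written, it does not. The structural step --- which you correctly identify as the heart --- contains two genuine errors. First, Losert's theorem for a compactly generated locally compact group of polynomial growth yields a compact normal $K$ with $G/K$ a \emph{Lie group of polynomial growth}, not a virtually nilpotent one: the Euclidean motion group $E(2)$ (or its universal cover) has polynomial growth but no proper finite-index subgroup at all, being connected, and is solvable but not nilpotent. Second, the extraction of a cocompact lattice ``via Malcev--Mostow'' is false in general: a Lie group of polynomial growth need not contain any lattice whatsoever (by Malcev's rationality criterion there are simply connected nilpotent Lie groups with no lattices). What rescues the argument --- and what you do not use --- is that $G=\mathrm{Aut}(X)$ is totally disconnected: a quotient of a totally disconnected locally compact group by a closed normal subgroup is again totally disconnected, and a totally disconnected Lie group is discrete, so $G/K$ is itself a finitely generated discrete group of polynomial growth to which Gromov's theorem applies directly, with no lattice theory needed. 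There is also an order-of-application problem in the growth upgrade: Losert's theorem requires genuine polynomial growth, so the subsequential bound must be promoted \emph{before} it is invoked; the ``analogue of van den Dries--Wilkie for compactly generated locally compact groups'' is not the classical statement (which concerns finitely generated groups), is not a consequence of submultiplicativity as your phrasing suggests, and the known ways to obtain it pass through exactly the kind of machinery (Breuillard's work, or Trofimov applied to the Cayley--Abels graph of $G$) that your sketch was meant to avoid. With the total-disconnectedness observation and an honest reference for that upgrade the locally compact route can be repaired, but in its present form the main argument has a hole precisely where you predicted the difficulty would lie.
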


By Theorem \ref{MT2} and Lemma \ref{LH11}, we obtain the following corollary.
\begin{corollary}\label{COAC}
Let $(X,d^X)$ be a quasi-transitive graph whose growth function satisfies $|B_p^X(n)|\leq Cn^A$ for infinitely many $n.$ Then
$$\dim H^d(X)\leq Cd^{D-1},$$ for $d\geq1$ where $D$ is the homogenous dimension of the nilpotent group in Lemma \ref{LH11}.
\end{corollary}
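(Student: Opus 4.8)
The plan is to reduce the statement directly to Theorem \ref{MT2} by way of the structural Lemma \ref{LH11}, so that the corollary becomes a one-step consequence of results already established. First I would record the bounded geometry of $X$: since $(X,d^X)$ is quasi-transitive, $Aut(X)$ acts with finitely many orbits, the degree function is constant along each orbit and hence takes only finitely many values, so $\deg x\le \Delta$ for some uniform $\Delta>0$. This is precisely the standing hypothesis required to run the machinery of Section 4, and it is the only structural input we extract from quasi-transitivity beyond the growth bound.

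Next I would feed the growth hypothesis into Lemma \ref{LH11}. The assumption that $|B_p^X(n)|\le Cn^A$ holds for infinitely many $n$ is exactly the one-scale polynomial control appearing there, so the lemma furnishes a finitely generated nilpotent group together with a finite generating set $S$, a Cayley graph $(G,S,d^S)$ of homogeneous dimension $D$, and a rough isometry $\phi:X\to G$. At this point I would note that a finitely generated nilpotent group automatically has polynomial volume growth by Bass' estimate \eqref{BAV2}, so $(G,S)$ qualifies as a Cayley graph of a group of polynomial volume growth, and the integer $D$ produced by Lemma \ref{LH11} is the same $D$ governing \eqref{BAV2}.

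With both hypotheses of Theorem \ref{MT2} verified --- namely that $X$ has bounded geometry and is roughly isometric to the Cayley graph $(G,S)$ of a group of polynomial volume growth with homogeneous dimension $D$ --- the conclusion $\dim H^d(X)\le Cd^{D-1}$ for $d\ge1$ follows at once. I do not expect a genuine obstacle here, since the entire analytic content is already packaged in Theorem \ref{MT2}; the single point deserving a word of care is the bookkeeping identification of the \emph{homogeneous dimension} supplied by the structural Lemma \ref{LH11} with the exponent $D$ controlling the volume growth \eqref{BAV2} of the nilpotent Cayley graph, which is the same integer by construction. Thus the corollary records that the rough-isometry-invariant optimal estimate applies to the natural class of quasi-transitive graphs of polynomial growth.
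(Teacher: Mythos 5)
Your proposal is correct and follows exactly the paper's route: the paper derives the corollary by combining Lemma \ref{LH11} (rough isometry to a nilpotent Cayley graph) with Theorem \ref{MT2}, using the same observations you make, namely that quasi-transitive graphs have bounded geometry and that finitely generated nilpotent groups have polynomial volume growth. The details you fill in (uniform degree bound from finitely many orbits, Bass' estimate identifying the homogeneous dimension) are precisely the bookkeeping the paper leaves implicit.
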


\end{document}